\theoremstyle{plain}
\newtheorem{theorem}{Theorem}[section]
\newtheorem{proposition}[theorem]{Proposition}
\newtheorem{lemma}[theorem]{Lemma}
\newtheorem{corollary}[theorem]{Corollary}
\theoremstyle{definition}
\DeclareMathOperator{\range}{Im} 
\DeclareMathOperator{\Alt}{Alt} 
\newcommand{\kHOM}{\ensuremath{k\text{-}\mathbf{HOM}}}
\newcommand{\kCORE}{\ensuremath{k\text{-}\mathbf{CORE}}}
\newcommand{\HOM}{\ensuremath{\mathbf{HOM}}}
\newcommand{\CORE}{\ensuremath{\mathbf{CORE}}}
\newcommand{\kQdHOM}{\ensuremath{k\text{-}(Q,d)\text{-}\mathbf{HOM}}}
\newcommand{\HHOM}{\ensuremath{H\text{-}\mathbf{HOM}}}
\newcommand{\kommentti}[1]{}
\def\dotcup{\mathop{\mathaccent\cdot\cup}} 
\newlength{\bigcupwidth}
\DeclareMathOperator*{\bigdotcup}{\makebox[\bigcupwidth][c]{\makebox[0mm][c]{$\cdot$}\makebox[0mm][c]{$\bigcup$}}}
\begin{document}

\title{On the homomorphism order of labeled posets}

\author{Léonard Kwuida}
\address[L. Kwuida]{Zurich University of Applied Sciences \\ School of Engineering \\ Technikumstrasse 9 \\ CH-8401 Winterthur\\ Switzerland.}
\email{leonard.kwuida@zhaw.ch}

\author{Erkko Lehtonen}
\address[E. Lehtonen]{University of Luxembourg \\ Faculty of Science, Technology and Communication \\ 6, rue Richard Coudenhove-Kalergi \\ L-1359 Luxembourg \\ Luxembourg}
\email{erkko.lehtonen@uni.lu}
\thanks{This research was partially supported by the Academy of Finland, grant \#{}120307.}

\begin{abstract}
Partially ordered sets labeled with $k$ labels ($k$-posets) and their homomorphisms are examined. We give a representation of directed graphs by $k$-posets; this provides a new proof of the universality of the homomorphism order of $k$-posets. This universal order is a distributive lattice. We investigate some other properties, namely the infinite distributivity, the computation of infinite suprema and infima, and the complexity of certain decision problems involving the homomorphism order of $k$-posets. Sublattices are also examined.
\end{abstract}

\maketitle

\section{Introduction}

A partially ordered set labeled with $k$ labels ($k$-poset), also known as a partially ordered multiset (pomset) or a partial word, is an object $(P;{\leq},c)$, where $(P;{\leq})$ is a partially ordered set and $c$ is a function that assigns to each element of $P$ a label from the set $\{0, 1, \ldots, k-1\}$. A homomorphism between $k$-posets is a mapping $h \colon (P;{\leq},c) \to (P';{\leq}',c')$ that preserves both order and labels. A quasiorder, called the homomorphism quasiorder, can be defined on the set of all $k$-posets as follows: $(P;{\leq},c) \leq (P';{\leq}',c')$ if and only if there is a homomorphism of $(P;{\leq},c)$ to $(P';{\leq}',c')$.

Labeled posets have been used as a model of parallel processes (see Pratt \cite{Pratt}), and they can be viewed as a generalization of strings. Algebraic properties of labeled posets have been studied by Grabowski \cite{Grabowski}, Gischer \cite{Gischer}, Bloom and Ésik \cite{BE}, and Rensink \cite{Rensink}. Homomorphisms of $k$-posets were studied in the context of Boolean hierarchies of partitions by Kosub \cite{Kosub}, Kosub and Wagner \cite{KW}, and Selivanov \cite{Selivanov}. Kuske \cite{Kuske} and Kudinov and Selivanov \cite{KS} studied the undecidability of the first-order theory of the homomorphism quasiorder of $k$-posets. The second author applied $k$-posets to analyse substitution instances of operations on finite sets when the inner functions are monotone functions (with respect to some fixed partial order on the base set) \cite{ULM} and showed that for $k \geq 2$, the homomorphism order of finite $k$-posets is a distributive lattice which is universal in the sense that it admits an embedding of every countable poset~\cite{Universal}. But these are not complete lattices.

The current paper continues the investigation of some properties and sublattices of the homomorphism order of $k$-posets. We establish a representation of directed graphs by $k$-posets, which gives rise to a new proof of the universality of the homomorphism order of $k$-posets and enables us to study the complexity of certain decision problems related to $k$-posets. We are also interested in computing with infinite suprema and infima. In particular we examine join-infinite distributivity (JID) and its dual, meet-infinite distributivity (MID); these are special cases of complete infinite distributivity (CID). These properties are defined by the following identities:
\begin{description}
\item[JID] $x \wedge \bigvee\{x_i \mid i \in I\} = \bigvee\{x \wedge x_i \mid i \in I\}$,
\item[MID] $x \vee \bigwedge\{x_i \mid i \in I\} = \bigwedge\{x \vee x_i \mid i \in I\}$,
\item[CID] $\bigwedge \bigl\{ \bigvee \{a_{ij} \mid j \in J\} \mid i \in I \bigr\} = \bigvee \bigl\{ \bigwedge\{a_{i \varphi(i)} \mid i \in I\} \bigm\vert \varphi \colon I \to J \bigr\}$,
\end{description}
for $I,J\neq\emptyset$.

\section{Labeled posets and homomorphisms}
\label{sec:kposets}

For a positive natural number $k$, a \emph{partially ordered set labeled with $k$ labels} (\emph{$k$-poset}) is an object $(P; {\leq}, c)$, where $(P; {\leq})$ is a partially ordered set and $c \colon P \to \{0, 1, \ldots, k-1\}$ is a \emph{labeling} function. A \emph{labeled poset} is a $k$-poset for some $k$. Every subset $P'$ of a $k$-poset $(P;{\leq},c)$ may be considered as a $k$-poset $(P';{\leq}|_{P'}, c|_{P'})$, called a \emph{$k$-subposet} of $(P;{\leq},c)$. We often simplify these notations and write $(P,c)$ or $P$ instead of $(P;{\leq},c)$, and we simply write $c$ for the restriction $c|_S$ of $c$ to any subset $S$ of its domain. If the underlying poset of a $k$-poset is a lattice, chain, tree, or forest, then we refer to \emph{$k$-lattices,} \emph{$k$-chains,} \emph{$k$-trees,} \emph{$k$-forests,} and so on. For $k \leq l$, every $k$-poset is also an $l$-poset. Finite $k$-posets can be represented by Hasse diagrams with numbers designating the labels assigned to each element; see the various figures of this paper. For general background on partially ordered sets and lattices, see any textbook on the subject, e.g., \cite{DP,Gratzer}.

A $k$-chain $a_1 < a_2 < \dots < a_n$ with labeling $c$ is \emph{alternating,} if $c(a_i) \neq c(a_{i+1})$ for all $1 \leq i \leq n-1$. The \emph{alternation number} of a $k$-poset $(P,c)$, denoted $\Alt(P,c)$, is the cardinality of the longest alternating $k$-chain that is a $k$-subposet of $(P,c)$.

We will adopt much of the terminology used for graphs and their homomorphisms (see \cite{HN}). (Recall that a graph \emph{homomorphism} $h \colon G \to G'$ is an edge-preserving mapping between the vertex sets of graphs $G$ and $G'$. A \emph{core} is a graph that does not admit a homomorphism to any proper subgraph of itself.) Let $(P,c)$ and $(P',c')$ be $k$-posets. A mapping $h \colon P \to P'$ that preserves both ordering and labels (i.e., $h(x) \leq h(y)$ in $P'$ whenever $x \leq y$ in $P$, and $c = c' \circ h$) is called a \emph{homomorphism} of $(P,c)$ to $(P',c')$ and denoted $h \colon (P,c) \to (P',c')$. The composition of homomorphisms is again a homomorphism. An \emph{endomorphism} of $(P,c)$ is a homomorphism $h \colon (P,c) \to (P,c)$. If a homomorphism $h \colon (P,c) \to (P',c')$ is bijective and the inverse of $h$ is a homomorphism of $(P',c')$ to $(P,c)$, then $h$ is called an \emph{isomorphism,} and $(P,c)$ and $(P',c')$ are said to be \emph{isomorphic.}

We denote by $\mathcal{P}_k$ and $\mathcal{L}_k$ the classes of all finite $k$-posets and $k$-lattices, respectively. We define a quasiorder $\leq$ on $\mathcal{P}_k$ as follows: $(P,c) \leq (P',c')$ if and only if there is a homomorphism of $(P,c)$ to $(P',c')$. Denote by $\equiv$ the equivalence relation on $\mathcal{P}_k$ induced by $\leq$. If $(P,c) \equiv (P',c')$, we say that $(P,c)$ and $(P',c')$ are \emph{homomorphically equivalent.} We denote by $\tilde{\mathcal{P}}_k$ the quotient set $\mathcal{P}_k / {\equiv}$, and the partial order on $\tilde{\mathcal{P}}_k$ induced by the homomorphism quasiorder $\leq$ is also denoted by $\leq$. The quasiorder $\leq$ and the equivalence relation $\equiv$ can be restricted to $\mathcal{L}_k$, and we denote by $\tilde{\mathcal{L}}_k$ the quotient set $\mathcal{L}_k / {\equiv}$.

The homomorphic equivalence class of $(P,c) \in \mathcal{P}_k$ is denoted by $[(P,c)] = \{(P',c') \in \mathcal{P}_k \mid (P,c) \equiv (P',c')\}$. We tend to identify the $\equiv$-classes by their representatives; that is, whenever we say that $(P,c)$ is an element of $\tilde{\mathcal{P}}_k$, it is to be understood as referring to the $\equiv$-class $[(P,c)]$.

A $k$-poset $(P,c)$ is a \emph{core,} if all endomorphisms of $(P,c)$ are surjective (equivalently, if $(P,c)$ is not homomorphically equivalent to any $k$-poset of smaller cardinality). Every $k$-poset is homomorphically equivalent to a core. Isomorphic $k$-posets are homomorphically equivalent by definition. Homomorphically equivalent $k$-posets are not necessarily isomorphic, but homomorphically equivalent cores are isomorphic. Thus we can take the cores as representatives of the homomorphic equivalence classes, and the restriction of the quasiorder $\leq$ on $\mathcal{P}_k$ to the set of cores is isomorphic to $(\tilde{\mathcal{P}}_k, {\leq})$.

Two elements $a$ and $b$ of a poset $P$ are \emph{connected,} if there exists a sequence $a_1, \dotsc, a_n$ of elements of $P$ such that $a_1 = a$, $a_n = b$, and for all $1 \leq i \leq n-1$ either $a_i \leq a_{i+1}$ or $a_i \geq a_{i+1}$. A nonempty poset is \emph{connected} if all pairs of its elements are connected. A \emph{connected component} of a poset $P$ is a subposet $C \subseteq P$ that is connected and such that for every $x \in P \setminus C$ the subposet $C \cup \{x\}$ is not connected. It is easy to verify that all homomorphic images of a connected poset are connected. A $k$-poset is a core if and only if all its connected components are cores and pairwise incomparable under $\leq$.

\section{Representation of directed graphs by $k$-posets}
\label{sec:representation}

Let $G = (V,E)$ be a directed graph. We associate with $G$ a $2$-poset $P_G := (P; {\leq}, c)$, where $P := (V \cup E) \times \{0, 1\}$, and $c(a,b) = b$ for all $a \in V \cup E$, $b \in \{0,1\}$, and the covering relations of $\leq$ are exactly the following:
\begin{compactitem}
\item $(a,0) < (a,1)$ for all $a \in V$,
\item $(a,1) < (a,0)$ for all $a \in E$,
\item for each edge $(u,v) \in E$, $(u,0) < ((u,v),0)$, $((u,v),1) < (v,1)$.
\end{compactitem}
It is clear from the construction that if $G$ is a subgraph of $H$, then $P_G$ is a $k$-subposet of $P_H$.
See Figure \ref{fig:PG} for an example of a directed graph and its representation by a $2$-poset.

\begin{figure}
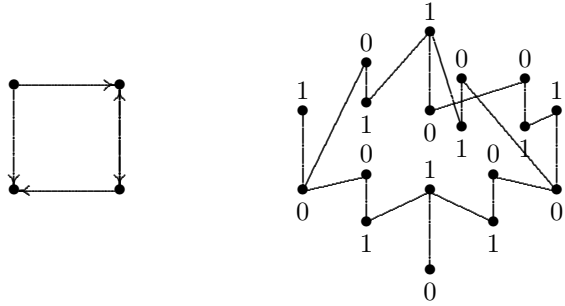

\[
\begindc{\digraph}[40]
\obj(0,1)[a]{}
\obj(1,1)[b]{}
\obj(1,0)[c]{}
\obj(0,0)[d]{}
\mor{a}{b}{}
\mor{a}{d}{}
\mor{c}{d}{}
\mor{b}{c}{}
\mor{c}{b}{}
\enddc
\qquad\qquad\qquad
\begindc{\undigraph}[3]
\obj(0,0)[a0]{0}[\south]
\obj(0,10)[a1]{1}[\north]
\obj(16,10)[b0]{0}[\south]
\obj(16,20)[b1]{1}[\north]
\obj(32,0)[c0]{0}[\south]
\obj(32,10)[c1]{1}[\north]
\obj(16,-10)[d0]{0}[\south]
\obj(16,0)[d1]{1}[\north]
\obj(8,11)[ab1]{1}[\south]
\obj(8,16)[ab0]{0}[\north]
\obj(8,-4)[ad1]{1}[\south]
\obj(8,2)[ad0]{0}[\north]
\obj(24,-4)[cd1]{1}[\south]
\obj(24,2)[cd0]{0}[\north]
\obj(20,8)[cb1]{1}[\south]
\obj(20,14)[cb0]{0}[\north]
\obj(28,8)[bc1]{1}[\south]
\obj(28,14)[bc0]{0}[\north]
\mor{a0}{a1}{}
\mor{b0}{b1}{}
\mor{c0}{c1}{}
\mor{d0}{d1}{}
\mor{a0}{ab0}{}
\mor{ab0}{ab1}{}
\mor{ab1}{b1}{}
\mor{a0}{ad0}{}
\mor{ad0}{ad1}{}
\mor{ad1}{d1}{}
\mor{c0}{cd0}{}
\mor{cd0}{cd1}{}
\mor{cd1}{d1}{}
\mor{b0}{bc0}{}
\mor{bc0}{bc1}{}
\mor{bc1}{c1}{}
\mor{c0}{cb0}{}
\mor{cb0}{cb1}{}
\mor{cb1}{b1}{}
\enddc
\]
\caption{Directed graph $G$ and its representation by a $2$-poset $P_G$.}
\label{fig:PG}
\end{figure}

\begin{proposition}
\label{GHhomom}
Let $G$ and $H$ be directed graphs. Then $G$ is homomorphic to $H$ if and only if $P_G$ is homomorphic to $P_H$.
\end{proposition}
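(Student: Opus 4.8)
The plan is to treat the two implications separately, with the forward direction being a routine verification and the backward direction requiring a structural analysis of which elements of $P_H$ can be comparable. For the forward direction, suppose $f \colon G \to H$ is a graph homomorphism. I would define a candidate map $h \colon P_G \to P_H$ by $h(a,b) = (f(a),b)$ for $a \in V$ and $h(e,b) = (f(e),b)$ for each edge $e = (u,v) \in E$, where $f(e) := (f(u),f(v))$; this is a genuine edge of $H$ precisely because $f$ is a graph homomorphism. Label preservation is immediate, since $h$ fixes the second coordinate. Order preservation then reduces to checking that each of the three families of covering relations defining $P_G$ is sent to a relation in $P_H$: the relations $(a,0)<(a,1)$ and $(e,1)<(e,0)$ map to the corresponding relations of the gadgets for $f(a)$ and $f(e)$, while $(u,0)<(e,0)$ and $(e,1)<(v,1)$ map to $(f(u),0)<(f(e),0)$ and $(f(e),1)<(f(v),1)$, which are exactly the cross-relations attached to the edge $f(e)=(f(u),f(v))$ in $P_H$.

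The backward direction is the heart of the argument, and the main obstacle is to show that a homomorphism $h \colon P_G \to P_H$ cannot \emph{mix up} vertex gadgets and edge gadgets: a priori $h$ could send the two-element chain coming from a vertex to some unrelated part of $P_H$. I would overcome this by first recording the comparability structure of an arbitrary $P_H$. Its minimal elements are the vertex-bottoms $(b,0)$ (label $0$) and the edge-bottoms $(g,1)$ (label $1$); its maximal elements are the vertex-tops $(b,1)$ (label $1$) and the edge-tops $(g,0)$ (label $0$); and it has height two, every cover running from a minimal to a maximal element. From this I would extract the following rigidity facts, each a short inspection of what lies above a given minimal element: (i) if a label-$0$ element lies strictly below a label-$1$ element, they are the bottom and top $(b,0)<(b,1)$ of a single vertex gadget; (ii) dually, a label-$1$ element strictly below a label-$0$ element must be the bottom and top $(g,1)<(g,0)$ of a single edge gadget; (iii) a vertex-bottom $(b,0)$ strictly below an edge-top $((p,q),0)$ forces $b=p$; and (iv) an edge-bottom $((p,q),1)$ strictly below a vertex-top $(c,1)$ forces $c=q$.

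Granting these facts, I would define $f$ and verify the homomorphism property. Since $h$ preserves labels and $(a,0)<(a,1)$ in $P_G$, the images have distinct labels and hence $h(a,0) < h(a,1)$; fact (i) then yields a vertex $f(a) \in V(H)$ with $h(a,0)=(f(a),0)$ and $h(a,1)=(f(a),1)$, defining $f \colon V(G) \to V(H)$. Now fix an edge $e=(u,v)$ of $G$. Applying $h$ to $(e,1)<(e,0)$ and using fact (ii) produces an edge $g=(p,q)$ of $H$ with $h(e,1)=(g,1)$ and $h(e,0)=(g,0)$. Finally, applying $h$ to the cross-relations $(u,0)<(e,0)$ and $(e,1)<(v,1)$ gives $(f(u),0) \leq (g,0)$ and $(g,1) \leq (f(v),1)$; since in each inequality the two sides are of different types (vertex versus edge), they are distinct, so facts (iii) and (iv) force $f(u)=p$ and $f(v)=q$. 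Hence $(f(u),f(v))=g \in E(H)$, so $f$ is a graph homomorphism, which completes the proof.
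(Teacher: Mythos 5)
Your proof is correct and takes essentially the same approach as the paper: the forward direction is the identical coordinate-wise construction, and your rigidity facts (i)--(iv) are just an explicit unpacking of the paper's observation that the two-element alternating chains (vertex and edge gadgets) must map to isomorphic alternating chains, after which the cross-relations $(u,0)<((u,v),0)$ and $((u,v),1)<(v,1)$ force the edge image to equal $(f(u),f(v))$. The only difference is that you spell out the height-two comparability analysis of $P_H$ that the paper leaves implicit.
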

\begin{proof}
Let $h \colon G \to H$ be a graph homomorphism. Then the mapping $g \colon P_G \to P_H$ defined as $g(v,b) = (h(v),b)$ for all $v \in V(G)$, $b \in \{0,1\}$; $g((u,v),b) = ((h(u),h(v)),b)$ for all $(u,v) \in E(G)$, $b \in \{0,1\}$, is easily seen to be a homomorphism. Clearly $g$ preserves the labels, and in order to show that $g(x) \leq g(y)$ in $P_H$ whenever $x \leq y$ in $P_G$ we have four cases to consider; recall that if $(u,v) \in E(G)$, then $(h(u),h(v)) \in E(H)$.
\begin{compactitem}
\item If $x = (u,0)$, $y = (u,1)$ where $u \in V(G)$, then $g(x) = g(u,0) = (h(u),0) < (h(u),1) = g(u,1) = g(y)$.
\item If $x = ((u,v),1)$, $y = ((u,v),0)$ where $u,v\in V(G)$ and $(u,v) \in E(G)$, then $g(x) = g((u,v),1) = ((h(u),h(v)),1) < ((h(u),h(v)),0) = g((u,v),0) = g(y)$.
\item If $x = (u,0)$, $y = ((u,v),0)$ where $u,v\in V(G)$ and $(u,v) \in E(G)$, then $g(x) = g(u,0) = (h(u),0) < ((h(u),h(v)),0) = g((u,v),0) = g(y)$.
\item If $x = ((u,v),1)$, $y = (v,1)$ where $u,v\in V(G)$ and $(u,v) \in E(G)$, then $g(x) = g((u,v),1) = ((h(u),h(v)),1) < (h(v),1) = g(v,1) = g(y)$.
\end{compactitem}

Assume then that $g \colon P_G \to P_H$ is a homomorphism. Since alternating chains must be mapped to isomorphic alternating chains by homomorphisms, we have that there are mappings $h \colon V(G) \to V(H)$, $e \colon E(G) \to E(H)$ such that $g(v,b) = (h(v),b)$ and $g((u,v),b) = (e(u,v),b)$ for all $v \in V(G)$, $(u,v) \in E(G)$, $b \in \{0,1\}$. Furthermore, the comparabilities $(u,0) < ((u,v),0)$, $((u,v),1) < (v,1)$ in $P_G$ must be preserved by $g$ for all edges $(u,v) \in E(G)$, that is, $(h(u),0) = g(u,0) < g((u,v),0) = (e(u,v),0)$ and $(e(u,v),1) = g((u,v),1) < g(v,1) = (h(v),1)$. Therefore, $e(u,v) \in E(H)$ equals $(h(u),h(v))$. We conclude that $h$ is a homomorphism of $G$ to $H$.
\end{proof}

\begin{proposition}
\label{core}
Let $G$ be a graph. Then $P_G$ is a core if and only if $G$ is a core.
\end{proposition}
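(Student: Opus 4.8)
The plan is to reduce the statement to a comparison of endomorphisms, using the correspondence that is already buried in the proof of Proposition~\ref{GHhomom}. Recall that a finite $k$-poset is a core exactly when each of its endomorphisms is surjective, and that the same surjectivity characterization holds for finite graphs: a homomorphism of $G$ onto a proper subgraph is precisely an endomorphism of $G$ with non-surjective image, and conversely. Hence it suffices to show that \emph{every} endomorphism of $P_G$ is surjective if and only if \emph{every} endomorphism of $G$ is surjective, and the natural route is to set up a bijection between the two endomorphism monoids under which surjectivity is preserved in both directions.

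First I would make this bijection explicit. Applying the proof of Proposition~\ref{GHhomom} with $H = G$, the argument that homomorphisms respect alternating chains shows that every endomorphism $g$ of $P_G$ has the form $g(v,b) = (h(v),b)$ and $g((u,v),b) = ((h(u),h(v)),b)$ for some $h$, and the reverse implication of that proposition guarantees that this $h$ (recovered by restricting $g$ to the vertex elements) is an endomorphism of $G$. Conversely, the forward implication of Proposition~\ref{GHhomom} shows that any endomorphism $h$ of $G$ induces, by the same formulas, an endomorphism $g$ of $P_G$. These two assignments are mutually inverse, so $h \mapsto g$ is a bijection from $\mathrm{End}(G)$ to $\mathrm{End}(P_G)$.

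Next I would verify that, under this bijection, $g$ is surjective if and only if $h$ is surjective. One direction is immediate: since an element $(w,b)$ with $w \in V(G)$ lies in the image of $g$ only if $w$ lies in the image of $h$, surjectivity of $g$ forces surjectivity of $h$ on the vertices. The converse is the step needing the most care. If $h$ is surjective, then, as $G$ is finite, $h$ is a bijection of $V(G)$; consequently the induced map $(u,v) \mapsto (h(u),h(v))$ on $E(G)$ is injective, because distinct edges have distinct endpoint pairs, and therefore it is surjective on the finite set $E(G)$. Thus every vertex element $(v,b)$ and every edge element $(\epsilon,b)$ of $P_G$ is attained by $g$, so $g$ is surjective.

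Combining these two observations, every endomorphism of $P_G$ is surjective precisely when every endomorphism of $G$ is surjective, which is exactly the assertion that $P_G$ is a core if and only if $G$ is a core. I expect the only genuine obstacle to be the edge bookkeeping in the converse surjectivity claim: one must confirm that recovering all the vertex elements automatically recovers all the edge elements, and this is precisely where finiteness together with the injectivity of the induced edge map is used.
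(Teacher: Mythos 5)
Your proof is correct, but it is organized differently from the paper's. Both arguments rest on the same structural fact, extracted from the proof of Proposition~\ref{GHhomom}, that every homomorphism between representing posets is induced coordinatewise by a map on vertices; you package this as a bijection $\mathrm{End}(G) \to \mathrm{End}(P_G)$ and then transfer surjectivity across that bijection, whereas the paper reasons at the level of images. Concretely, for the direction ``$P_G$ core $\Rightarrow$ $G$ core'' the paper uses Proposition~\ref{GHhomom} only as a black box, combined with the observation that $H \subseteq G$ implies $P_H \subseteq P_G$: if $G$ had a homomorphism to a proper subgraph $H$, then $P_G$ would have a homomorphism to the proper subposet $P_H$. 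For the converse, the paper observes that the image of any endomorphism of $P_G$ is itself of the form $P_H$ for a subgraph $H$ of $G$ (namely the image subgraph), so a non-surjective endomorphism of $P_G$ yields a homomorphism of $G$ onto a proper subgraph. That image-level argument entirely avoids the step that costs you the most work, namely proving that a vertex-surjective endomorphism of a finite graph is automatically surjective on edges (vertex-surjective $\Rightarrow$ bijective $\Rightarrow$ induced edge map injective $\Rightarrow$ surjective by finiteness); this same issue is also silently present in your opening assertion that ``the same surjectivity characterization holds for finite graphs,'' whose nontrivial half is exactly that argument, so it is good that you supply it. Your finiteness step is correct and legitimately available here, since the paper's objects are finite, but note that it is forced only by your choice to track vertex-surjectivity: had you tracked surjectivity onto the whole image subgraph (vertices and edges simultaneously, using that edge elements of $P_G$ can only be hit by edge elements), the transfer would have been immediate and, like the paper's proof, independent of any finiteness or injectivity considerations. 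What your route buys in exchange is a sharper conclusion than the proposition requires: an explicit isomorphism between the endomorphism monoids of $G$ and of $P_G$.
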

\begin{proof}
If $P_G$ is a core, then it is not homomorphic to any of its proper $k$-subposets. In particular, by Proposition~\ref{GHhomom}, there is no proper subgraph $H$ of $G$ such that $P_G$ is homomorphic to $P_H$. Thus, $G$ does not retract to any proper subgraph, and hence $G$ is a core.

If $P_G$ is not a core, then there is a homomorphism $h \colon P_G \to P'$ for some proper $k$-subposet $P' = \range h$ of $P_G$. It is clear from the proof of Proposition \ref{GHhomom} that the homomorphic image $P'$ of $P_G$ is of the form $P_H$ for some graph $H$. Then $H$ is a proper subgraph and a retract of $G$, and so $G$ is not a core.
\end{proof}

We describe a variant of the above representation of directed graphs by labeled posets. We associate with each directed graph $G$ the $3$-poset $L_G$, which is defined like $P_G$ but with a greatest element and a least element adjoined. The two new elements have label $2$. (For the empty graph $\emptyset$, we agree that $L_\emptyset$ is the empty $3$-poset.) It is easy to see that $L_G$ is a $3$-lattice if and only if $G$ is loopless. (A single loop gives rise to the $3$-poset shown in Figure \ref{fig:loop}, which is not a $3$-lattice.)
\begin{figure}
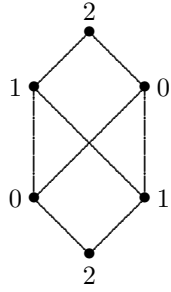

\[
\begindc{\undigraph}[3]
\obj(0,0)[a]{2}[\south]
\obj(-7,7)[b]{0}[\west]
\obj(7,7)[c]{1}[\east]
\obj(-7,21)[d]{1}[\west]
\obj(7,21)[e]{0}[\east]
\obj(0,28)[f]{2}[\north]
\mor{a}{b}{}
\mor{a}{c}{}
\mor{b}{d}{}
\mor{b}{e}{}
\mor{c}{d}{}
\mor{c}{e}{}
\mor{d}{f}{}
\mor{e}{f}{}
\enddc
\]
\caption{The $3$-poset representation of a loop.}
\label{fig:loop}
\end{figure}

\begin{proposition}
\label{GHhomomLattice}
Let $G$ and $H$ be directed graphs. Then $G$ is homomorphic to $H$ if and only if $L_G$ is homomorphic to $L_H$.
\end{proposition}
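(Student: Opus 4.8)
The plan is to reduce everything to Proposition~\ref{GHhomom} by controlling the behaviour of a homomorphism on the two adjoined elements, which are the only elements of $L_G$ and $L_H$ carrying the label $2$. Write $\bot$ and $\top$ for the least and greatest elements adjoined in passing from $P_G$ to $L_G$ (and likewise in $L_H$). I would dispose of the degenerate case first: if $G = \emptyset$ then $L_G = \emptyset$ and both sides of the stated equivalence hold trivially, so I may assume throughout that $G$ has at least one vertex, and hence that $P_G$ contains an element of label $0$ or $1$.

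For the forward implication, I would take the homomorphism $g \colon P_G \to P_H$ supplied by Proposition~\ref{GHhomom} and extend it to $\hat{g} \colon L_G \to L_H$ by setting $\hat{g}(\bot) = \bot$ and $\hat{g}(\top) = \top$. Label preservation is immediate, since both adjoined elements carry label $2$ and $g$ already preserves labels on $P_G$. Order preservation is routine: any comparability in $L_G$ that is not already a comparability in $P_G$ involves $\bot$ or $\top$, and these are sent to the least and greatest elements of $L_H$, which are comparable below/above everything.

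The substance is in the converse. Suppose $f \colon L_G \to L_H$ is a homomorphism. Because $f$ preserves labels, $f(\bot)$ and $f(\top)$ must each carry label $2$, and the only label-$2$ elements of $L_H$ are its own least and greatest elements. The key step is to rule out the wrong assignment: if $f(\bot) = \top$, then since $\bot < x$ for every $x \in P_G$ we would get $\top = f(\bot) \leq f(x)$, forcing $f(x) = \top$ (nothing exceeds $\top$) and hence $c(f(x)) = 2$, contradicting label preservation for any $x \in P_G$ of label $0$ or $1$. Thus $f(\bot) = \bot$, and the symmetric argument using $x < \top$ gives $f(\top) = \top$.

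Finally, every $x \in P_G$ has label $0$ or $1$, so $f(x)$ has label $0$ or $1$ as well, whence $f(x) \in P_H$; therefore $f$ restricts to a homomorphism $P_G \to P_H$, and Proposition~\ref{GHhomom} delivers $G \to H$. I expect the only genuine obstacle to be the short case analysis pinning down $f(\bot)$ and $f(\top)$, together with the attendant empty-graph degeneracy; once the adjoined elements are fixed to their counterparts, the result transfers directly along Proposition~\ref{GHhomom}.
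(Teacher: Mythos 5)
Your proof is correct and takes essentially the same approach as the paper's: both rest on the observation that the two adjoined elements are the only ones carrying label $2$, so any homomorphism respects the decomposition of $L_G$ into $P_G$ plus bounds, and everything else transfers along Proposition~\ref{GHhomom}. One small remark: pinning down $f(\bot)=\bot$ and $f(\top)=\top$ is actually superfluous in your converse, since label preservation alone already forces $f(P_G)\subseteq P_H$, which is all your restriction argument uses.
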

\begin{proof}
The proof is similar to that of Proposition \ref{GHhomom}. We only need to observe that the greatest and least elements are the only elements with label $2$, and every homomorphism must map the greatest and least elements to the greatest and least elements, respectively. Otherwise homomorphisms act as described in the proof of Proposition~\ref{GHhomom}.
\end{proof}

\begin{proposition}
Let $G$ be a graph. Then $L_G$ is a core if and only if $G$ is a core.
\end{proposition}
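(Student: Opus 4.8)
The plan is to mimic the proof of Proposition~\ref{core} almost verbatim, replacing $P_G$ by $L_G$ and invoking Proposition~\ref{GHhomomLattice} in place of Proposition~\ref{GHhomom}. The one genuinely new ingredient is the behaviour of homomorphisms on the adjoined extremal elements, so I would record at the outset the observation (already present in the proof of Proposition~\ref{GHhomomLattice}) that the greatest and least elements of $L_G$ are the unique elements carrying label $2$; hence every endomorphism of $L_G$ must fix the top and the bottom and act on the remaining elements exactly as a homomorphism of the $P_G$-part does.

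For the forward implication I would argue the contrapositive. If $G$ is not a core, then $G$ retracts onto some proper subgraph $H$, so there is a homomorphism $G \to H$ with $H$ a proper subgraph of $G$. Since $H \subseteq G$ implies that $L_H$ is a $3$-subposet of $L_G$ (the two constructions share the adjoined top and bottom, and the order of $L_H$ is the restriction of the order of $L_G$), and since $H$ proper forces $L_H$ to be a \emph{proper} subposet, Proposition~\ref{GHhomomLattice} yields a homomorphism $L_G \to L_H$. Composing with the inclusion $L_H \hookrightarrow L_G$ produces a non-surjective endomorphism of $L_G$, so $L_G$ is not a core.

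For the converse I would again take the contrapositive. Suppose $L_G$ is not a core, so it carries a non-surjective endomorphism $g$ with image $P' = \range g$ a proper $3$-subposet. By the opening observation, $g$ fixes the top and bottom and restricts on the $P_G$-part to vertex and edge maps $h, e$ exactly as in Proposition~\ref{GHhomom}, with $e(u,v) = (h(u),h(v))$; consequently $P'$ has the form $L_H$, where $H$ is the image subgraph with vertex set $h(V(G))$ and the corresponding edge set. Because $P'$ is proper and $L_H = L_G$ would force $H = G$, the subgraph $H$ is proper, and $h$ is then a retraction of $G$ onto $H$, witnessing that $G$ is not a core.

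The only point requiring genuine care — the sole place where the adjoined elements could in principle cause trouble — is the claim that the image of an endomorphism of $L_G$ is exactly of the form $L_H$. This rests on the fact that the top and bottom are the unique label-$2$ elements and are therefore necessarily fixed, so that they contribute precisely the two extremal elements of $L_H$ and nothing more; once this is granted, the correspondence between proper subposets of $L_G$ reachable by endomorphisms and proper subgraphs of $G$ is identical to the one used in Proposition~\ref{core}. I do not expect any serious obstacle beyond this bookkeeping.
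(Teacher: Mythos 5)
Your proof is correct and follows exactly the route the paper intends: the paper's own proof is just the remark that one argues as for $P_G$ (Proposition~\ref{core}), substituting Proposition~\ref{GHhomomLattice} for Proposition~\ref{GHhomom}, which is precisely what you do. Your additional bookkeeping --- that the top and bottom are the unique label-$2$ elements, hence fixed by every endomorphism, so that images of endomorphisms are exactly of the form $L_H$ --- is the detail the paper leaves implicit, and you have handled it correctly.
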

\begin{proof}
The proof is similar to that of Proposition \ref{core}.
\end{proof}

A countable poset is \emph{universal} if every countable poset can be embedded into it. We established in \cite{Universal} that the posets $\tilde{\mathcal{P}}_k$ ($k \geq 2$) and $\tilde{\mathcal{L}}_k$ ($k \geq 3$) are universal. Our representation of directed graphs by $2$-posets and that of loopless directed graphs by $3$-lattices provides a new proof of this fact.

\begin{theorem}
The posets $\tilde{\mathcal{P}}_k$ ($k \geq 2$) and $\tilde{\mathcal{L}}_k$ ($k \geq 3$) are universal.
\end{theorem}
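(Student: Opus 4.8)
The plan is to reduce the statement to the well-known universality of the homomorphism order of finite directed graphs, and then to transport a universal embedding across the two representations $G \mapsto P_G$ and $G \mapsto L_G$ constructed above. Let $\tilde{\mathcal{D}}$ denote the set of homomorphic equivalence classes of finite directed graphs, ordered by the existence of a homomorphism, and let $\tilde{\mathcal{D}}_0$ be the suborder consisting of classes of finite \emph{loopless} digraphs. It is a classical fact (see, e.g., \cite{HN} and the references therein) that $\tilde{\mathcal{D}}$ is universal, and the standard constructions can be carried out within loopless digraphs, so that $\tilde{\mathcal{D}}_0$ is already universal. I would take these two facts as the sole external input; everything else then follows from Propositions~\ref{GHhomom} and~\ref{GHhomomLattice}.

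First I would verify that $G \mapsto [P_G]$ induces an order-embedding of $\tilde{\mathcal{D}}$ into $\tilde{\mathcal{P}}_2$. By Proposition~\ref{GHhomom}, $G$ is homomorphic to $H$ if and only if $P_G$ is homomorphic to $P_H$; hence $G \equiv H$ if and only if $P_G \equiv P_H$, so the assignment is well defined on equivalence classes and both preserves and reflects the order (in particular it is injective). Composing a universal embedding $Q \hookrightarrow \tilde{\mathcal{D}}$ of an arbitrary countable poset $Q$ with this embedding shows that $\tilde{\mathcal{P}}_2$ is universal. To pass from $k = 2$ to arbitrary $k \geq 2$, I would observe that any two $2$-posets use only the labels $0,1$, so a label- and order-preserving map between them is the same object whether the posets are regarded as $2$-posets or as $k$-posets; thus the inclusion $\mathcal{P}_2 \hookrightarrow \mathcal{P}_k$ induces an order-embedding $\tilde{\mathcal{P}}_2 \hookrightarrow \tilde{\mathcal{P}}_k$, and universality is inherited.

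For the lattices the argument is parallel but uses the second representation. For a loopless digraph $G$ the poset $L_G$ is a $3$-lattice, and by Proposition~\ref{GHhomomLattice} the assignment $G \mapsto [L_G]$ induces an order-embedding of $\tilde{\mathcal{D}}_0$ into $\tilde{\mathcal{L}}_3$; here one notes that the homomorphism order on $\tilde{\mathcal{L}}_3$ is simply the restriction of that on $\tilde{\mathcal{P}}_3$, since homomorphisms of $3$-posets need not respect the lattice structure. Composing with a universal embedding into $\tilde{\mathcal{D}}_0$ yields universality of $\tilde{\mathcal{L}}_3$, and the same label-inclusion argument as above extends this to $\tilde{\mathcal{L}}_k$ for every $k \geq 3$.

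The main obstacle is the external ingredient: establishing, or locating a precise reference for, the universality of the homomorphism order of finite directed graphs, together with the sharpening that it is achieved already within the loopless digraphs — the latter being exactly what is needed so that the images $L_G$ are genuine $3$-lattices. Once this is granted, the remaining work is bookkeeping, since Propositions~\ref{GHhomom} and~\ref{GHhomomLattice} guarantee that the two representations are faithful with respect to the homomorphism quasiorder. I would only take care to confirm the routine points that $G \mapsto P_G$ and $G \mapsto L_G$ are injective on homomorphism classes and that the label-inclusion maps are order-reflecting, both of which are immediate from the cited propositions and the definition of a homomorphism.
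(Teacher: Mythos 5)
Your proposal is correct and follows essentially the same route as the paper: the paper's proof likewise invokes the known universality of the homomorphism order of (loopless) directed graphs (citing Pultr--Trnkov\'a and Hubi\v{c}ka--Ne\v{s}et\v{r}il) and transfers it via Propositions~\ref{GHhomom} and~\ref{GHhomomLattice}. Your additional bookkeeping (well-definedness on equivalence classes, order-reflection, and the label-inclusion step from $k=2$ to $k\geq 2$ and $k=3$ to $k\geq 3$) is exactly what the paper leaves implicit.
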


\begin{proof}
It is a well-known fact that the homomorphism order of (loopless) directed graphs is universal (see \cite{PT}; see also Hubi\v{c}ka and Ne\v{s}et\v{r}il's \cite{HubN} simpler proof). The claim then follows from Propositions \ref{GHhomom} and \ref{GHhomomLattice}.
\end{proof}

How hard is it to find homomorphisms between $k$-posets? The $k$-poset representation of directed graphs given above allows us to transfer some complexity results from directed graphs to $k$-posets. We represent directed graphs by $k$-posets in such a way that there is a homomorphism between graphs if and only if there is homomorphism between their $k$-poset representations. This representation enables us to prove the NP-completeness of certain decision problems related to $k$-posets. More precisely, we will show that the problem of deciding whether there exists a homomorphism between two $k$-posets and the problem of deciding whether a $k$-poset admits a nonsurjective endomorphism are NP-complete. 

 We define the \emph{$k$-poset homomorphism problem} $\kHOM$ and the \emph{$k$-poset non-coreness problem} $\kCORE$ as follows. Note that cores are precisely the $k$-posets for which the answer to the question of $\kCORE$ is \textsc{no}.
\begin{quotation}
\noindent\textbf{Problem} $\kHOM$

\noindent\textit{Instance:} $k$-posets $(P,c)$ and $(P',c')$.

\noindent\textit{Question:} Is there a homomorphism $(P,c) \to (P',c')$?
\end{quotation}
\begin{quotation}
\noindent\textbf{Problem} $\kCORE$

\noindent\textit{Instance:} A $k$-poset $(P,c)$.

\noindent\textit{Question:} Is there a nonsurjective endomorphism of $(P,c)$?
\end{quotation}

\textbf{$k$-HOM} and \textbf{$k$-CORE} are analogues of the \emph{graph homomorphicity problem} $\HOM$ and the \emph{graph non-coreness problem} $\CORE$, defined as follows.
\begin{quotation}
\noindent\textbf{Problem} $\HOM$

\noindent\textit{Instance:} Graphs $G$ and $G'$.

\noindent\textit{Question:} Is there a homomorphism $G \to G'$?
\end{quotation}
\begin{quotation}
\noindent\textbf{Problem} $\CORE$

\noindent\textit{Instance:} A graph $G$.

\noindent\textit{Question:} Is there a nonsurjective homomorphism $G \to G$?
\end{quotation}

It is an easy exercise to show that both $\kHOM$ and $\kCORE$ are NP-complete, using the well-known fact that $\HOM$ and $\CORE$ are NP-complete \cite{HN1990,HN1992} and the representation of graphs by labeled posets as described in Section \ref{sec:representation}.

It follows from Proposition \ref{GHhomomLattice} that $\kHOM$ remains NP-complete even when its inputs are restricted to $k$-lattices for $k \geq 3$. However, for $2$-lattices, $\kHOM$ is solvable in polynomial time. For, it was shown by Kosub and Wagner \cite{KW} that every $2$-lattice is homomorphically equivalent to its longest alternating chain. For each $n \geq 1$, there are exactly two nonisomorphic alternating $2$-chains of cardinality $n$, and these are fully described by the length $n$ and the label of the least element. Thus a $2$-lattice $(L,c)$ is homomorphic to a $2$-lattice $(L',c')$ if and only if $\Alt(L,c) < \Alt(L',c')$ or $\Alt(L,c) = \Alt(L',c')$ and the least elements of $(L,c)$ and $(L',c')$ have the same label. It is an easy exercise to show that the alternation number of a finite $k$-poset can be determined in polynomial time.

Consider also the \emph{$k$-poset $(Q,d)$-homomorphicity problem} $\kQdHOM$, defined as follows. Here $(H,d)$ is a fixed $k$-poset and we should decide whether a given $k$-poset is homomorphic to $(H,d)$.
\begin{quotation}
\noindent\textbf{Problem} $\kQdHOM$

\noindent\textit{Instance:} A $k$-poset $(P,c)$.

\noindent\textit{Question:} Is there a homomorphism $(P,c) \to (Q,d)$?
\end{quotation}
This is an analogue of the \emph{graph $H$-colouring problem} $\HHOM$, defined as follows.
\begin{quotation}
\noindent\textbf{Problem} $\HHOM$

\noindent\textit{Instance:} A graph $G$.

\noindent\textit{Question:} Is there a homomorphism $G \to H$?
\end{quotation}

It is clear that $\kQdHOM$ is in NP for any $k$-poset $(Q,d)$. It was shown by Hell and Ne\v{s}et\v{r}il \cite{HN1990} that $\HHOM$ is NP-complete for any non-bipartite graph $H$, and it is polynomial-time solvable for any bipartite graph $H$. Thus, there are NP-complete cases of $\kQdHOM$, e.g., the cases where $(Q,d) = P_G$ for some nonbipartite graph $G$. There are also polynomial-time solvable cases, e.g., the cases where the labeling $d$ in $(Q,d)$ is a constant function---it only suffices to check whether the labeling $c$ of the input $(P,c)$ is constant function taking on the same value as $d$, and this can certainly be decided in polynomial time.

It remains an open question whether there is a dichotomy between the poly\-no\-mi\-al-time solvable and NP-complete cases of $\kQdHOM$, analogously to that of $\HHOM$.

\section{Properties of the homomorphism order of $k$-posets}
The homomorphism order of $k$-posets forms a distributive lattice with disjoint union as join, and label-matching product as meet~\cite{Universal}. 
The \emph{disjoint union} of a family $(S_i)_{i \in I}$ of sets is defined as the set
\[
\bigdotcup_{i \in I} S_i = \{(i,x) \mid i \in I,\, x \in S_i\}.
\]
If $I = \{1,2\}$, then we write $S_1 \dotcup S_2$ for $\bigdotcup_{i \in \{1,2\}} S_i$. The \emph{disjoint union} of a family $(P_i,c_i)_{i \in I}$ of $k$-posets is defined to be the $k$-poset $\bigdotcup_{i \in I} (P_i,c_i) = (\bigdotcup_{i \in I} P_i, d)$, where $d(i,x) = c_i(x)$ for all $(i,x) \in \bigdotcup_{i \in I} P_i$, and the order on $\bigdotcup_{i \in I} P_i$ is defined as $(i,x) \leq (j,y)$ if and only if $i = j$ and $x \leq y$ in $P_i$.

The \emph{label-matching product} of a family $(P_i, c_i)_{i \in I}$ of $k$-posets is defined to be the $k$-poset $\bigotimes_{i \in I} (P_i, c_i) := (Q,d)$, where
\[
Q := \{(a_i)_{i \in I} \in \prod_{i \in I} P_i \mid \text{$c_i(a_i) = c_j(a_j)$ for all $i, j \in I$}\},
\]
$(a_i)_{i \in I} \leq (b_i)_{i \in I}$ in $Q$ if and only if $a_i \leq b_i$ in $P_i$ for all $i \in I$, and the labeling is defined by $d((a_i)_{i \in I}) = c_i(a_i)$ for some $i \in I$ (the choice of $i$ does not matter by the definition of $Q$). If $I = \{1, 2\}$, then we write $(P_1, c_1) \otimes (P_2, c_2)$ for $\bigotimes_{i \in \{1, 2\}} (P_i, c_i)$.

It was shown in \cite{Universal} that $(\tilde{\mathcal{P}}_k, {\leq})$ is a distributive lattice with the lattice operations defined as follows:
\[
(P,c) \vee (P',c') = (P,c) \dotcup (P',c'),\quad\text{ and }\quad
(P,c) \wedge (P',c') = (P,c) \otimes (P',c').
\]
 Here the lattice operations are defined in terms of equivalence class representatives.

\begin{proposition}
The join-irreducible elements of $(\tilde{\mathcal{P}}_k, {\leq})$ are (the equivalence classes of) the cores with at most one connected component.
\end{proposition}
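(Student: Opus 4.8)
The plan is to take cores as the equivalence-class representatives (as justified in Section~\ref{sec:kposets}) and to prove the two implications separately. Throughout I will use that the join in $(\tilde{\mathcal{P}}_k,{\leq})$ is disjoint union, so that $[(A,a)] \vee [(B,b)] = [(A,a) \dotcup (B,b)]$. I will also rely on three earlier facts: that homomorphic images of connected posets are connected, that a $k$-poset is a core precisely when its connected components are cores that are pairwise incomparable, and that a core is a representative of minimal cardinality in its class (so homomorphically equivalent cores are isomorphic). The empty $k$-poset is the bottom element and has no connected component; it satisfies the join-irreducibility implication vacuously, so I treat it as a degenerate case and concentrate on the connected cores.

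For the first implication, suppose $(P,c)$ is a connected core and that $[(P,c)] = [(A,a)] \vee [(B,b)] = [(A,a) \dotcup (B,b)]$; I want $[(P,c)]$ to equal $[(A,a)]$ or $[(B,b)]$. From the equivalence there is a homomorphism $f \colon (P,c) \to (A,a) \dotcup (B,b)$. Since $(P,c)$ is connected, its image $f(P)$ is connected, and because a disjoint union has no comparabilities between the $(A,a)$-part and the $(B,b)$-part, this connected image lies entirely inside one summand, say inside the copy of $(A,a)$. Then $f$ corestricts to a homomorphism $(P,c) \to (A,a)$, so $(P,c) \leq (A,a)$. Conversely, the equivalence also provides a homomorphism $(A,a) \dotcup (B,b) \to (P,c)$, whose restriction to the $(A,a)$-part gives $(A,a) \leq (P,c)$. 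Hence $(P,c) \equiv (A,a)$ and $[(P,c)] = [(A,a)]$, as required.

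For the converse I argue by contraposition. Suppose the core $(P,c)$ has two or more connected components, say $C_1, \dots, C_n$ with $n \geq 2$; by the core characterization each $C_i$ is itself a nonempty core. As a $k$-poset, $(P,c)$ is the disjoint union $C_1 \dotcup (C_2 \dotcup \cdots \dotcup C_n)$, so $[(P,c)] = [C_1] \vee [C_2 \dotcup \cdots \dotcup C_n]$. Neither summand can equal $[(P,c)]$: each of $C_1$ and $C_2 \dotcup \cdots \dotcup C_n$ is a proper $k$-subposet of $(P,c)$ of strictly smaller cardinality, so an equivalence $(P,c) \equiv C_1$ or $(P,c) \equiv C_2 \dotcup \cdots \dotcup C_n$ would contradict the minimality of the core $(P,c)$. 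Thus $[(P,c)]$ is not join-irreducible. I expect the crux of the argument to be the factoring step in the first implication, namely turning the abstract lattice identity $[(P,c)] = [(A,a)] \vee [(B,b)]$ into an honest homomorphism into the disjoint union and then using connectedness to land it in a single summand; once that is in place, both directions reduce to the core characterization and the minimality of cores.
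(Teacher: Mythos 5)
Your proof is correct and takes essentially the same route as the paper's: for a connected core, connectedness forces the homomorphism into the disjoint union to land in a single summand, and restricting the reverse homomorphism to that summand yields the homomorphic equivalence; a core with $n \geq 2$ components is exhibited as a join of strictly smaller elements. The only cosmetic difference is in the reducibility direction, where you rule out $(P,c) \equiv C_1$ (and the other summand) via the cardinality-minimality characterization of cores, whereas the paper leaves this step implicit, relying on the pairwise incomparability of the components of a core.
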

\begin{proof}
The empty $k$-poset is the smallest element of $\tilde{\mathcal{P}}_k$, so it is clearly join-irreducible. We can then assume that $(P,c)$ is a nonempty core. Let $(P_1,c_1), \dotsc,\linebreak[0] (P_n,c_n)$ be the connected components of $(P,c)$. These connected component are cores and they are pairwise incomparable under $\leq$. If $n > 1$, then $(P,c)$ is the disjoint union of its connected components and thus it is not join-irreducible.

Assume then that $n = 1$. Suppose, on the contrary, that $(P,c)$ is not join-irreducible. Then there exist cores $(Q_1,d_1)$ and $(Q_2,d_2)$ that are not equivalent to $(P,c)$ such that $(P,c) \equiv (Q_1,d_1) \dotcup (Q_2,d_2)$. Thus there exist homomorphisms $h \colon (P,c) \to (Q_1,d_1) \dotcup (Q_2,d_2)$ and $g \colon (Q_1,d_1) \dotcup (Q_2,d_2) \to (P,d)$. Since $(P,c)$ is connected, $h$ is in fact a homomorphism of $(P,c)$ to $(Q_1,d_1)$ or to $(Q_2,d_2)$. Furthermore, for $i = 1,2$, the restriction of $g$ to $Q_i$ is a homomorphism of $(Q_i,d_i)$ to $(P,c)$. Thus, $(P,c)$ is homomorphically equivalent to either $(Q_1,d_1)$ or $(Q_2,d_2)$, a contradiction.
\end{proof}

Denote by $\mathcal{J}_k$ the set of join-irreducible elements of the lattice $(\tilde{\mathcal{P}}_k, {\leq})$, which we just showed to be the set of cores with at most one connected component. Since every finite core has only a finite number of connected components and is the supremum of its connected components, we conclude that every element of $\tilde{\mathcal{P}}_k$ is the join of a finite number of elements of $\mathcal{J}_k$. Hence $\mathcal{J}_k$ is a join-dense subset of $\tilde{\mathcal{P}}_k$. As we have mentioned already, $\tilde{\mathcal{P}}_k$ is not complete. The smallest complete poset (lattice) containing $\tilde{\mathcal{P}}_k$ is its Dedekind-MacNeille completion.  One way to construct it is to take the set of normal ideals of $\tilde{\mathcal{P}}_k$ ordered by inclusion \cite{MacNeille} or to take the concept lattices of the formal contexts $\bigl(\tilde{\mathcal{P}}_k,\tilde{\mathcal{P}}_k,\leq\bigr)$ or $\bigl(\mathcal{J}_k,\tilde{\mathcal{P}}_k,\leq\bigr)$~\cite{GW99}. We denote by $\hat{\mathcal{P}}_k$ the Dedekind-MacNeille completion of $\tilde{\mathcal{P}}_k$. Note that $\tilde{\mathcal{P}}_k$ is join-dense and meet-dense in $\hat{\mathcal{P}}_k$. Then $\mathcal{J}_k$ is a join-dense subset of $\hat{\mathcal{P}}_k$. 
{\em Is $\hat{\mathcal{P}}_k$ an algebraic lattice?} 
Recall that an element $a$ of a complete lattice $L$ is called \emph{compact} if $a\leq{\bigvee}X$ for some $X\subseteq L$ implies that $a\leq{\bigvee}X_1$ for some finite $X_1\subseteq X$, and that a complete lattice $L$ is called \emph{algebraic} or \emph{compactly generated} if every element is the join of compact elements. More generally, {\em is the MacNeille completion of any compactly generated lattice also compactly generated?} Before we answer this question, we first investigate a subposet of $\hat{\mathcal{P}}_k$ in which we can compute all suprema and infima of elements of $\tilde{\mathcal{P}}_k$.

We are looking for posets containing $\tilde{\mathcal{P}}_k$ as subposet in which we can compute all suprema and infima of elements of $\tilde{\mathcal{P}}_k$. Since $\tilde{\mathcal{P}}_k$ is countably infinite, each completion should contain at least the countable unions of finite $k$-posets. Since any countable union of finite sets is again countable, we will start by enlarging a bit the class $\tilde{\mathcal{P}}_k$. We denote by $\mathcal{P}_{k\omega}$ the class of countable $k$-posets. The homomorphism quasi-order on $\mathcal{P}_{k\omega}$ is defined in the same way as for finite $k$-posets and it induces a partial order on the quotient $\mathcal{P}_{k\omega} / {\equiv}$, which we will denote by $\tilde{\mathcal{P}}_{k \omega}$. A poset $(P,\leq)$ is called \emph{$\omega$-complete}\footnote{This notion can be generalized to $\kappa$-completeness for any cardinal $\kappa$ as follows: a poset $(P,\leq)$ is $\kappa$-complete if the suprema and infima of subsets of cardinality at most $\kappa$ exist in $P$.} if the suprema and infima of countable subsets of $P$ exist. For countable posets, completeness and $\omega$-completeness coincide. 

\begin{lemma}
The poset $(P_{k\omega},\le)$ is $\omega$-complete.
\end{lemma}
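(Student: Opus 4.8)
The plan is to compute both kinds of extrema by the same constructions that give the lattice operations on $\tilde{\mathcal{P}}_k$, now applied to countable families, and then to check that the results admit countable representatives. Throughout, fix a countable family $(P_i,c_i)_{i\in I}$ of countable $k$-posets with $I$ countable.

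For suprema I would take the disjoint union $S = \bigdotcup_{i\in I}(P_i,c_i)$. As a countable union of countable sets it is again a countable $k$-poset, so $S \in \mathcal{P}_{k\omega}$, and its coproduct property makes the verification routine: the coprojections $P_i \hookrightarrow S$ show that $S$ is an upper bound, and any homomorphisms $f_i\colon (P_i,c_i)\to (U,d)$ into a common upper bound assemble into a single homomorphism $S \to (U,d)$ given by $(i,x)\mapsto f_i(x)$, so $S \le (U,d)$. Hence $[S] = \bigvee_{i\in I}[P_i,c_i]$ and the supremum exists in $\tilde{\mathcal{P}}_{k\omega}$.

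For infima I would take the label-matching product $M = \bigotimes_{i\in I}(P_i,c_i)$. The projections give $M \le (P_i,c_i)$ for every $i$, and the key universal property is that, since homomorphisms preserve labels, a family of homomorphisms $g_i\colon (N,c_N)\to (P_i,c_i)$ assembles into $x\mapsto (g_i(x))_{i\in I}$, whose values automatically satisfy the label-matching condition and hence lie in $M$. Thus for every $k$-poset $(N,c_N)$ one has $(N,c_N)\le M$ if and only if $(N,c_N)\le (P_i,c_i)$ for all $i$; that is, $M$ is the greatest lower bound of the family in the homomorphism quasiorder on the class of \emph{all} $k$-posets. The whole lemma therefore reduces to the single point that $M$ is homomorphically equivalent to some countable $k$-poset $M_0$. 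Granting this, $M_0$ is a lower bound (since $M_0\le M\le P_i$) and dominates every countable lower bound (since $N\le P_i$ for all $i$ gives $N\le M\equiv M_0$), so $[M_0]=\bigwedge_{i\in I}[P_i,c_i]$ in $\tilde{\mathcal{P}}_{k\omega}$.

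The main obstacle is precisely this reduction, because a countable product of countable sets may have the cardinality of the continuum, so $M$ itself need not be countable. The label-matching constraint, which forces all coordinates of a thread to carry one common label, does a great deal of collapsing---it already reduces many such products to finite $k$-posets in the simplest examples---but it does not by itself bound the cardinality. To overcome this I would produce a countable subposet $M_0\subseteq M$ admitting a homomorphic retraction $r\colon M \to M_0$; since the inclusion $M_0\hookrightarrow M$ is always a homomorphism, this yields $M_0\equiv M$ at once. I would build $M_0$ by a downward Löwenheim--Skolem / closure argument: start from a countable subset and repeatedly close it up so that it meets the image of every connected component of $M$ and contains the witnesses needed to define $r$ consistently on comparabilities. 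An equivalent and perhaps more convenient formulation is to exhibit a countable family of connected countable lower bounds that is cofinal under $\le$ among all connected countable lower bounds of $(P_i,c_i)_{i\in I}$---each such lower bound maps into a single connected component of $M$---and to take $M_0$ to be the disjoint union of this family. Establishing that this closure can be made simultaneously countable and a retract (equivalently, that such a countable cofinal family exists) is the crux; once it is in place, the reduction above completes the proof.
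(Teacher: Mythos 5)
Your supremum half coincides with the paper's proof: the disjoint union of the family, which is countable, together with the coprojections and the assembly of homomorphisms $f_i$ into a map on the union, is exactly the argument given there, and it is complete. Your infimum half also begins exactly as the paper does: form the label-matching product $M=\bigotimes_{t\in T}(P_t,c_t)$, note that the projections make $M$ a lower bound, and that any family of homomorphisms $g_t\colon (N,c_N)\to (P_t,c_t)$ assembles into $x\mapsto (g_t(x))_{t\in T}$, whose values satisfy the label-matching condition because homomorphisms preserve labels; hence $M$ is the greatest lower bound in the homomorphism quasiorder on the class of \emph{all} $k$-posets. The difference is what happens next. The paper stops at this point: it verifies countability only for the disjoint union, and never addresses the fact that $M$, being a subset of a countable product of countable sets, can have cardinality $2^{\aleph_0}$ (take every $P_t$ to be a countably infinite antichain with constant label $0$; then the label-matching condition is vacuous and $M$ is the full product), so $M$ need not belong to $\mathcal{P}_{k\omega}$ at all. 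You are right that the lemma therefore reduces to showing either that $M$ is homomorphically equivalent to a countable $k$-poset, or --- weaker, and sufficient --- that the connected countable lower bounds of the family admit a countable cofinal subfamily under $\leq$; your reduction to this statement, via the observation that a connected lower bound maps into a single connected component of $M$, is correct.

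However, your proposal does not close that gap: the L\"owenheim--Skolem/closure argument is only a sketch, and you yourself flag it as ``the crux.'' Nothing in the sketch explains why the closure process stabilizes at a \emph{countable} subposet that is a retract of $M$ (elementary substructures are not in general retracts, and the countable subposets of $M$ have uncountable cofinality under inclusion, so any argument must exploit the homomorphism order in an essential way that is not yet present). A small additional imprecision: your parenthetical ``(equivalently, that such a countable cofinal family exists)'' is not an equivalence --- a countable retract yields a cofinal family, but a cofinal family yields only the existence of the infimum in $\tilde{\mathcal{P}}_{k\omega}$, not $M_0\equiv M$; fortunately the weaker conclusion is all the lemma needs, so your logic survives. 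The fair summary is this: your attempt is incomplete, but it is incomplete at precisely the point where the paper's own proof is silent, since the paper establishes only the universal property of $M$ within the class of all $k$-posets and tacitly treats $M$ as an element of $\tilde{\mathcal{P}}_{k\omega}$. What you have identified is a genuine gap in the published argument, not a defect of your approach relative to it; closing it (or refuting the claim) would require a new idea that neither you nor the paper supplies.
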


\begin{proof}
Suprema and infima will be constructed as in \cite{Universal}. 
Let $(P_t,c_t)_{t\in T}$ be a countable family of elements of $\mathcal{P}_{k\omega}$. Define a $k$-poset $(\bar{P},c)$ as the disjoint unions of $(P_t,c_t)$'s, i.e.,
\[
\bar{P} := \bigdotcup_{t\in T} P_t
\quad\text{and}\quad
c(t,a) = c_t(a).
\]
Then $\bar{P}$ is countable and $(\bar{P},c)$ is in $\mathcal{P}_{k\omega}$. Moreover $(\bar{P},c)$ is the supremum of $(P_t,c_t)_{t \in T}$. In fact, it is clear that each inclusion map $\tau_t \colon P_t \to \bar{P}$, $x \mapsto (t,x)$ is a homomorphism of $k$-posets; if $(P_t,c_t) \leq (Q,d)$, then there are $k$-poset homomorphisms $h_t \colon P_t \to Q$ for each $t \in T$; define $h \colon \bar{P} \to Q$ by $h(t,p) := h_t(p)$, for every $t \in T$ and $p \in P_t$. The mapping $h$ is a $k$-poset homomorphism and thus $(\bar{P},c) \leq (Q,d)$. Therefore $(\bar{P},c)$ is the supremum of $(P_t,c_t)_{t \in T}$. For the infimum, consider the label-matching product $(\tilde{P},\tilde{c})$ of $\bigl((P_t,c_t)\bigr)_{t \in T}$ given by:
\[
\tilde{P} := \{a \in \prod_{t \in T} P_t \mid \text{$c_t(a_t) = c_s(a_s)$ for all $s, t \in T$}\}
\quad \text{ and } \quad
\tilde{c}(a) := c_t(a_t).
\]
$\tilde{P}$ keeps only the elements having the same label on all components and sets this as its label. 
Of course the projections $\pi_t \colon (\tilde{P},\tilde{c}) \to (P_t,c_t)$, $a \mapsto a_t$ ($t \in T$) are $k$-poset homomorphisms; thus $(\tilde{P},\tilde{c}) \leq (P_t,c_t)$ for all $t \in T$. If $(Q,d) \leq (P_t,c_t)$ for all $t \in T$, then there are $k$-poset homomorphisms $g_t \colon (Q,d) \to (P_t,c_t)$. Define $g \colon Q \to \tilde{P}$ by $g(q) := \bigl( g_t(q) \bigr)_{t \in T}$. Then $g$ is a homomorphism of $k$-posets, and $(Q,d) \leq (\tilde{P},\tilde{c})$.
\end{proof}

As an $\omega$-complete poset, $(\tilde{\mathcal{P}}_{k\omega},\leq)$ is a lattice containing $(\tilde{\mathcal{P}}_k,\leq)$ as a sublattice, in which all suprema and infima of $\tilde{\mathcal{P}}_k$ exist. An $\omega$-complete poset $(P,\leq)$ is called \emph{$\omega$-join-distributive} (\emph{$\omega$-meet-distributive}) if for any index set $T$ of cardinality at most $\omega$, for any family $(a_t)_{t \in T}$ of elements of $P$ and for any $b \in P$, we have 
\begin{align*}
b \wedge \bigvee_{t \in T} a_t &= \bigvee_{t \in T}(b \wedge a_t) \\
\text{(}
\quad
b \vee \bigwedge_{t \in T} a_t &= \bigwedge_{t \in T}(b \vee a_t),
\qquad
\text{respectively).}
\end{align*} 
If an $\omega$-complete poset is both $\omega$-join- and $\omega$-meet-distributive, we call it \emph{$\omega$-distri\-bu\-ti\-ve}
\footnote{Replacing $\omega$ with $\kappa$ gives $\kappa$-distributivity. This is a generalization of distributivity ($\kappa=2$). For finite cardinals $\kappa$, the notions of $\kappa$-join-distributivity, $\kappa$-meet-distributivity and distributivity are equivalent. This is unfortunately no longer true for $\kappa\geq\omega$.}.
The $\omega$-complete poset $(\tilde{\mathcal{P}}_{k\omega}, {\leq})$ is $\omega$-distributive as we can see from Lemmas~\ref{L:Inf-distr} and~\ref{L:join-inf-distr}.

\begin{lemma}\label{L:Inf-distr}
The $\omega$-complete poset $(\tilde{\mathcal{P}}_{k\omega}, {\leq})$ is $\omega$-join-distributive.
\end{lemma}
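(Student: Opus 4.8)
The plan is to replace the lattice-theoretic identity by a concrete statement about $k$-posets and to verify it by exhibiting an explicit isomorphism. By the preceding lemma the supremum of a countable family $(P_t,c_t)_{t\in T}$ is represented by the disjoint union $\bigdotcup_{t\in T}(P_t,c_t)$, and by the results of the previous section the meet of two elements is represented by their label-matching product. Fixing a representative $(B,c_B)$ of $b$ and representatives $(P_t,c_t)$ of the $a_t$, the asserted identity $b \wedge \bigvee_{t} a_t = \bigvee_{t}(b\wedge a_t)$ amounts to
\[
(B,c_B)\otimes\bigdotcup_{t\in T}(P_t,c_t)\;\equiv\;\bigdotcup_{t\in T}\bigl((B,c_B)\otimes(P_t,c_t)\bigr).
\]
I would in fact prove the stronger statement that these two $k$-posets are \emph{isomorphic}, from which homomorphic equivalence is immediate.

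The key observation is that comparabilities in a disjoint union never cross between distinct components, so the label-matching product with $(B,c_B)$ splits cleanly along the components. Concretely, an element of the left-hand side is a pair $\bigl(x,(t,z)\bigr)$ with $x\in B$, $t\in T$, $z\in P_t$ and $c_B(x)=c_t(z)$, while an element of the right-hand side is a pair $\bigl(t,(x,z)\bigr)$ subject to the same label-matching condition. I would define $\Phi\bigl(x,(t,z)\bigr):=\bigl(t,(x,z)\bigr)$ and check that $\Phi$ is a bijection, that it preserves labels (both elements carry the common value $c_B(x)=c_t(z)$), and that it preserves and reflects the order: on the left $\bigl(x,(t,z)\bigr)\le\bigl(x',(t',z')\bigr)$ holds if and only if $x\le x'$ in $B$, $t=t'$, and $z\le z'$ in $P_t$, which is exactly the defining condition for $\bigl(t,(x,z)\bigr)\le\bigl(t',(x',z')\bigr)$ on the right. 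Hence $\Phi$ is an isomorphism. This is the same isomorphism that underlies the finite distributivity of $\tilde{\mathcal{P}}_k$ established in \cite{Universal}, now applied to a countable family.

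The verification is essentially routine; the points demanding care are bookkeeping rather than genuine difficulty. First, one must keep in mind that the join and meet of $\tilde{\mathcal{P}}_{k\omega}$ are defined only up to homomorphic equivalence, so the argument should proceed on chosen representatives and then invoke that an isomorphism yields equivalence; this is harmless here precisely because $\Phi$ is a genuine isomorphism. Second, the index set $T$ is allowed to be countably infinite, but the disjoint union and label-matching product are defined for arbitrary families and the $\omega$-completeness lemma already guarantees that these constructions compute the relevant suprema and infima, so infiniteness introduces no new obstacle. The one place where the argument genuinely uses the order structure is the fact that a disjoint union has no comparabilities between its components: this is what lets the single first coordinate $t$ be pulled out and makes $\Phi$ order-preserving in both directions, and it is the step I would present most carefully.
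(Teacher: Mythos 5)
Your proposal is correct and follows essentially the same route as the paper: the paper's proof also exhibits the explicit map $(x,(t,y)) \mapsto (t,(x,y))$ and verifies it is a label- and order-preserving bijection, hence an isomorphism between $(Q,d) \otimes \bigl(\bigdotcup_{t \in T}(P_t,c_t)\bigr)$ and $\bigdotcup_{t \in T}\bigl((Q,d) \otimes (P_t,c_t)\bigr)$, so equality holds in $\tilde{\mathcal{P}}_{k\omega}$. Your write-up is in fact somewhat more careful than the paper's on the bookkeeping points (passing to representatives, and the observation that comparabilities never cross components of a disjoint union).
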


\begin{proof}
Let $b := (Q,d) \in \tilde{\mathcal{P}}_{k \omega}$ and $(P_t,c_t)_{t \in T}$ be a countable family of elements of $\tilde{\mathcal{P}}_{k\omega}$. We set $a_t := (P_t,c_t)$. To show that $(\tilde{\mathcal{P}}_{k\omega}, {\leq})$ is $\omega$-join-distributive, we observe that $(Q,d) \otimes \left(\bigdotcup_{t\in T}(P_t,c_t)\right)$ and $\bigdotcup_{t\in T} \bigl( (Q,d) \otimes (P_t,c_t) \bigr)$ are homomorphically equivalent. In fact for any $t$, $x$ and $y$, we have
\begin{eqnarray*}
(x,t,y)\in (Q,d) \otimes \bigl(\bigdotcup_{t \in T}(P_t,c_t) \bigr)
& \iff & x \in Q,\ t \in T,\ a \in P_t\ \text{ and}\\
& \phantom{\iff} & d(x) = \bar{c}(t,y) = c_t(y) \\
& \iff & (x,y) \in (Q,d) \otimes (P_t,c_t) \\
& \iff & (t,x,y) \in \bigdotcup_{t \in T} (Q,d) \otimes (P_t,c_t);
\end{eqnarray*}
then $h \colon (x,t,y) \mapsto (t,x,y)$ defines a $k$-poset isomorphism of $(Q,d) \otimes \bigl( \bigdotcup_{t \in T} (P_t,c_t) \bigr)$ onto $\bigdotcup_{t \in T} \bigl( (Q,d) \otimes (P_t,c_t) \bigr)$. Note that the label of $(x,t,y)$ in $(Q,d) \otimes \bigl( \bigdotcup_{t \in T} (P_t,c_t) \bigr)$ is $c_t(y)$, which is also the label of $(t,x,y)$ in $\bigdotcup_{t \in T} (Q,d) \otimes (P_t,c_t)$. Thus in $(\mathcal{P}_{k\omega}, {\leq})$ we have 
\[
b \wedge \bigvee_{t \in T} a_t = (Q,d) \otimes \bigl( \bigdotcup_{t \in T} (P_t,c_t) \bigr) = \bigdotcup_{t \in T} \bigl( (Q,d) \otimes (P_t,c_t) \bigr) = \bigvee_{t \in T}(b \wedge a_t).
\] 
\end{proof}

\begin{lemma}\label{L:join-inf-distr}
The $\omega$-complete poset $(\tilde{\mathcal{P}}_{k\omega},\leq)$ is $\omega$-meet-distributive.
\end{lemma}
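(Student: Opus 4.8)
The plan is to reformulate $\omega$-meet-distributivity in terms of the lattice operations and then prove the required homomorphic equivalence by exhibiting homomorphisms in both directions. Writing $b = (Q,d)$, $a_t = (P_t,c_t)$, and $S_t := (Q,d) \dotcup (P_t,c_t)$, the identity to be verified is
\[
(Q,d) \dotcup \Bigl( \bigotimes_{t\in T} (P_t,c_t) \Bigr) \equiv \bigotimes_{t\in T} S_t,
\]
where the left side is $b \vee \bigwedge_{t\in T} a_t$ and the right side is $\bigwedge_{t\in T} (b \vee a_t)$. I set $R := \bigotimes_{t\in T}(P_t,c_t)$, so that the left side is $(Q,d) \dotcup R$.

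One inequality is routine. I would give the diagonal homomorphism $(Q,d)\dotcup R \to \bigotimes_{t\in T} S_t$ that sends $q \in Q$ to the constant tuple carrying $q$ (viewed in the $Q$-summand of each $S_t$) in every coordinate, and sends a label-matched tuple $(p_t)_{t\in T} \in R$ to the tuple whose $t$-th entry is $p_t$ (viewed in the $P_t$-summand of $S_t$). Both assignments are label-matched and order-preserving, so this is a homomorphism, giving $b \vee \bigwedge_{t\in T} a_t \leq \bigwedge_{t\in T}(b\vee a_t)$; this is the inequality that in fact holds in every poset in which the relevant suprema and infima exist.

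The substance is the reverse homomorphism $\bigotimes_{t\in T} S_t \to (Q,d)\dotcup R$. The key structural observation is that, since $S_t$ is a disjoint union, any two comparable elements of $S_t$ lie in the same summand; because the order on a label-matching product is componentwise, this forces comparable tuples of $\bigotimes_{t\in T} S_t$ to agree, coordinate by coordinate, on which summand each entry inhabits. Hence the \emph{type} $\sigma\colon T \to \{Q,P\}$ recording, for a tuple $(x_t)_{t\in T}$, whether $x_t$ lies in the $Q$- or the $P_t$-summand is constant along comparabilities and therefore along connected components; consequently $\bigotimes_{t\in T} S_t$ is, as a poset, the disjoint union over all types $\sigma$ of its type-$\sigma$ subposets. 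I can then define the homomorphism separately on each type-$\sigma$ subposet. If $\sigma$ is identically $P$, the corresponding subposet is exactly $R$, which I embed by the identity into the $R$-summand. If $\sigma(t_0) = Q$ for some $t_0$, I project every tuple of that subposet to its $t_0$-th coordinate, landing in the $Q$-summand $(Q,d)$; the projection is order- and label-preserving, and composing it with the inclusion of $(Q,d)$ is again a homomorphism. Since these pieces map pairwise order-disconnected parts of the source into the two summands of $(Q,d)\dotcup R$, they glue to a single homomorphism, yielding $\bigwedge_{t\in T}(b\vee a_t) \leq b\vee\bigwedge_{t\in T} a_t$ and hence the equivalence.

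The main obstacle is the structural step identifying the type as an invariant of connected components; everything else is then a matter of collapsing each $Q$-containing component onto $Q$ by a projection and recognizing the all-$P$ component as $R$. I would be careful to note that, unlike the join-distributive case treated in Lemma~\ref{L:Inf-distr}, this argument does \emph{not} produce an isomorphism: the right-hand product typically has far more connected components than the left-hand side, so only homomorphic equivalence holds, which is precisely what is required in $\tilde{\mathcal{P}}_{k\omega}$.
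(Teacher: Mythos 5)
Your proof is correct and is essentially the paper's own argument: the paper likewise reduces the nontrivial inequality to constructing a homomorphism $\bigotimes_{t\in T}\bigl((Q,d)\dotcup(P_t,c_t)\bigr)\to(Q,d)\dotcup\bigotimes_{t\in T}(P_t,c_t)$ that sends an all-$P$ tuple to the corresponding element of $\bigotimes_{t\in T}(P_t,c_t)$ and any tuple with a $Q$-coordinate to its entry at the least such index, with order-preservation resting on exactly your observation that comparable tuples must agree on which summand each coordinate inhabits. Your repackaging of this observation as a disjoint-union decomposition into type classes, and your explicit diagonal map for the easy inequality (which the paper simply notes holds in any poset), are only presentational differences.
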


\begin{proof}
We know that 
\[
b \vee \bigwedge_{t \in T} a_t \leq \bigwedge_{t \in T} (b \vee a_t)
\] 
always holds. Our aim is to find a $k$-poset homomorphism of 
$\bigotimes_{t \in T} \bigl( (Q,d) \dotcup (P_t, c_t) \bigr)$ to 
$(Q,d) \dotcup \bigotimes_{t \in T} (P_t, c_t)$.
Note that 
\[
(s,x) \in (Q,d) \dotcup \bigotimes_{t \in T}(P_t,c_t)
\iff
s=1\ \&\ x \in Q \text{ or } s=2\ \&\ x \in \bigotimes_{t \in T}(P_t,c_t).
\]
Now let $\mathfrak{X} \in \bigotimes_{t \in T} \bigl( (Q,d) \dotcup (P_t,c_t) \bigr)$. Then $\mathfrak{X}$ is a $T$-sequence of elements of $(Q,d) \dotcup (P_t,c_t)$ whose components have the same label, say $\mathfrak{X} = (i_t, x_t)_{t \in T}$ with $i_t \in \{1, 2\}$ and $x_t \in Q$ if $i_t = 1$ and $x_t \in P_t$ if $i_t = 2$, and $(d \dotcup c_t)(i_t,x_t) = (d \dotcup c_s)(i_s,x_s)$ for all $s, t \in T$. 
Define the map $h \colon \bigotimes_{t \in T} \bigl( (Q,d) \dotcup (P_t,c_t) \bigr) \to (Q,d) \dotcup \bigotimes_{t \in T} (P_t, c_t)$ as follows:
\[
h((i_t, x_t)_{t \in T}) =
\begin{cases}
(2, (x_t)_{t \in T}) & \text{if $i_t = 2$ for all $t \in T$,}\\
(1, x_j) & \text{if $S = \{t \in T \mid i_t = 1\} \neq \emptyset$ and $j = \min S$.}
\end{cases}
\]
(For an arbitrary cardinality $\kappa$, we assume that $T$ is well-ordered, and we take the minimum with respect to a fixed well-ordering.)
We need to verify that $h$ is a homomorphism. It is clear that $h$ preserves labels. As regards preservation of order, let $\mathfrak{X}_\ell = (i_t^\ell, x_t^\ell)_{t \in T}$ ($\ell = 1, 2$), and assume that $\mathfrak{X}_1 \leq \mathfrak{X}_2$ in $\bigotimes_{t \in T} \bigl( (Q,d) \dotcup (P_t,c_t) \bigr)$. Then $(i_t^1, x_t^1) \leq (i_t^2, x_t^2)$ in $(Q,d) \dotcup (P_t,c_t)$ for all $t \in T$, which in turn implies that $i_t^1 = i_t^2$ and $x_t^1 \leq x_t^2$ (in $(Q,d)$ or in $(P_t,c_t)$, depending on the value of $i_t^1$) for all $t \in T$. Thus the sets
\[
S_\ell = \{t \in T \mid i_t^\ell = 1\}
\quad (\ell = 1, 2)
\]
are equal. Hence either $h(\mathfrak{X}_\ell) = (2, (x_t^\ell)_{t \in T})$ for $\ell = 1, 2$ or $h(\mathfrak{X}_\ell) = (1, x_j^\ell)$ for $\ell = 1, 2$, where $j = \min S_1 = \min S_2$. In both cases it is obvious that $h(\mathfrak{X}_1) \leq h(\mathfrak{X}_2)$.
\end{proof}

\begin{theorem}
\label{T:Inf-distr}
Let $(a_t)_{t \in T}$ be a family of elements of $\tilde{\mathcal{P}}_k$, and let $b \in \tilde{\mathcal{P}}_k$. If $(a_t)_{t \in T}$ has a supremum in $\tilde{\mathcal{P}}_k$, then the family $(b \wedge a_t)_{t \in T}$ has a supremum in $\tilde{\mathcal{P}}_k$, and it holds that
\[
b \wedge \bigvee_{t \in T} a_t = \bigvee_{t \in T} (b \wedge a_t).
\] 
Similarly, if $(a_t)_{t \in T}$ has an infimum in $\tilde{\mathcal{P}}_k$, then the family $(b \wedge a_t)_{t \in T}$ has an infimum in $\tilde{\mathcal{P}}_k$, and it holds that
\[
b \vee \bigwedge_{t \in T} a_t = \bigwedge_{t \in T} (b \vee a_t).
\] 
\end{theorem}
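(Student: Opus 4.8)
The plan is to deduce both identities from the $\omega$-distributivity of the larger lattice $(\tilde{\mathcal{P}}_{k\omega},\leq)$ established in Lemmas~\ref{L:Inf-distr} and~\ref{L:join-inf-distr}. First I would reduce to a countable index set: since $\tilde{\mathcal{P}}_k$ is countable, the family $(a_t)_{t\in T}$ takes only countably many distinct values, and both sides of each identity, as well as the existence of the relevant suprema and infima, depend only on the set $\{a_t \mid t\in T\}$. So I may assume $T$ countable and regard everything inside $\tilde{\mathcal{P}}_{k\omega}$, into which $\tilde{\mathcal{P}}_k$ embeds as a sublattice with the same finite operations $\wedge$ and $\vee$. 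The two assertions are order-dual, so I treat the first and obtain the second by the dual argument, using Lemma~\ref{L:join-inf-distr} in place of Lemma~\ref{L:Inf-distr}.

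Write $s=\bigvee_{t\in T}a_t$ for the supremum assumed to exist in $\tilde{\mathcal{P}}_k$. One inequality is immediate: from $a_t\leq s$ and monotonicity of $\wedge$ we get $b\wedge a_t\leq b\wedge s$ for all $t$, so $b\wedge s$ is an upper bound of $(b\wedge a_t)_{t\in T}$ in $\tilde{\mathcal{P}}_k$. The substantive task is to show that $b\wedge s$ is the \emph{least} such upper bound, which at once yields the existence of $\bigvee_{t}(b\wedge a_t)$ and the claimed equality. Accordingly I take an arbitrary finite $u\in\tilde{\mathcal{P}}_k$ with $b\wedge a_t\leq u$ for every $t$ and aim at $b\wedge s\leq u$.

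The idea is to transport the leastness property of $s$ through the operation $b\wedge(-)$. Concretely, I would produce a finite $k$-poset $w$ with $a_t\leq w$ for all $t$ and $b\wedge w\leq u$; then $s\leq w$ because $s$ is the least upper bound of the $a_t$, and hence $b\wedge s\leq b\wedge w\leq u$, as required. The natural candidate is the relative pseudocomplement $w=(b\Rightarrow u)$, i.e. a largest finite $k$-poset $x$ with $b\wedge x\leq u$: by its defining property $a_t\leq(b\Rightarrow u)$ is equivalent to $b\wedge a_t\leq u$, and $b\wedge(b\Rightarrow u)\leq u$ holds. Thus the whole statement reduces to showing that $b\wedge(-)$ preserves the suprema that exist in $\tilde{\mathcal{P}}_k$, equivalently that $\tilde{\mathcal{P}}_k$ is relatively pseudocomplemented; dually, the second identity reduces to $b\vee(-)$ preserving existing infima.

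The main obstacle is exactly this last point, and it is genuinely delicate. Suprema in $\tilde{\mathcal{P}}_k$ need \emph{not} agree with those in $\tilde{\mathcal{P}}_{k\omega}$: the disjoint union $\bigdotcup_{t}a_t=\bigvee_{t}a_t$ computed in $\tilde{\mathcal{P}}_{k\omega}$ can lie strictly below a finite supremum $s$ of the same family in $\tilde{\mathcal{P}}_k$, so one cannot simply invoke the $\omega$-identity $b\wedge\bigdotcup_{t}a_t=\bigdotcup_{t}(b\wedge a_t)$ of Lemma~\ref{L:Inf-distr} and read off the result. I therefore expect the crux to be the explicit construction of the finite witness $w=(b\Rightarrow u)$ out of $b$ and $u$ (a label-matching ``exponential''), together with the verification that it is finite and enjoys the stated universal property. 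Once that is in hand, the disjoint-union and label-matching-product computations underlying Lemmas~\ref{L:Inf-distr} and~\ref{L:join-inf-distr} should supply the routine verifications, and the order-dual construction built from $b\vee(-)$ should settle the meet identity.
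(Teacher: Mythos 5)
Your overall skeleton is sound: the reduction to countable $T$, the easy inequality, and the closing step (if some $w \in \tilde{\mathcal{P}}_k$ satisfies $a_t \leq w$ for all $t$ and $b \wedge w \leq u$, then $s \leq w$ by leastness of $s$, whence $b \wedge s \leq b \wedge w \leq u$) are all correct. The genuine gap is that the single load-bearing ingredient --- the existence of the finite witness $w = (b \Rightarrow u)$, i.e.\ a greatest element of the ideal $W = \{x \in \tilde{\mathcal{P}}_k \mid b \wedge x \leq u\}$ --- is never established; you defer it as an ``explicit construction'' to be supplied later, but that construction \emph{is} the theorem, and it is not routine. The graph-theoretic exponential does not transfer to labeled posets: the natural candidate (elements of label $i$ being the monotone maps from the $i$-labeled part of $b$ to the $i$-labeled part of $u$, with $f \leq g$ iff $f(x) \leq g(y)$ whenever $x \leq y$ in $b$) does satisfy the adjunction $x \otimes b \to u \iff x \to w$, but it fails transitivity: for $f \leq g \leq h$ with labels $i, j, l$ one would need an element of label $j$ between $x$ and $y$ in $b$, which need not exist; and passing to the transitive closure restores a poset but destroys the adjunction. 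Note also that $\tilde{\mathcal{P}}_k$ has no greatest element, so $W$ has no maximum whenever $W = \tilde{\mathcal{P}}_k$ (e.g.\ when $b \leq u$); that case is harmless ($b \wedge s \leq b \leq u$ directly), but it shows even the statement of your reduction needs care. As it stands, you have reduced Theorem \ref{T:Inf-distr} to an unproven --- and, as far as I can see, nowhere established --- property of $\tilde{\mathcal{P}}_k$, not proved it.

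For comparison, the paper's own proof is precisely the route you reject: it asserts that the claim ``follows from Lemmas \ref{L:Inf-distr} and \ref{L:join-inf-distr} and the fact that we are dealing with finite $k$-posets only'', with no further detail. Your objection to that route is well-founded: Lemma \ref{L:Inf-distr} computes $b \wedge \bigdotcup_t a_t$, where $\bigdotcup_t a_t$ is the supremum formed in $\tilde{\mathcal{P}}_{k\omega}$, and this element is in general only \emph{below} a supremum $s$ formed in $\tilde{\mathcal{P}}_k$; in fact $s \equiv \bigdotcup_t a_t$ holds exactly when finitely many of the $a_t$ already dominate the whole family, and nothing in the paper excludes the contrary situation (in the closely analogous homomorphism order of finite graphs it genuinely occurs: by the sparse incomparability lemma, $K_3$ is the least finite upper bound of the $3$-colourable triangle-free cores, although no finite subfamily dominates them all). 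So your critique exposes a real lacuna in the paper's one-line argument; but your replacement fills that lacuna with a construction you have not carried out and whose feasibility is unclear, so it cannot be accepted as a proof. The dual (infimum) identity has exactly the same status in both your proposal and the paper.
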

\begin{proof}
The claim follows from Lemmas~\ref{L:Inf-distr} and \ref{L:join-inf-distr} and the fact that we are dealing with finite $k$-posets only.
\end{proof}

\begin{corollary}
$(\tilde{\mathcal{P}}_{k\omega}, {\leq})$ is a distributive lattice.
\end{corollary}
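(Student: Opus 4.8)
The plan is to observe that this corollary is an immediate specialization of the two preceding lemmas, so the work has essentially been done already. First I would note that by the lemma establishing $\omega$-completeness, $(\tilde{\mathcal{P}}_{k\omega}, {\leq})$ admits suprema and infima of all countable subsets; in particular every two-element (indeed every finite) subset has a supremum and an infimum. Hence $(\tilde{\mathcal{P}}_{k\omega}, {\leq})$ is a lattice, whose binary join and meet are given by disjoint union and label-matching product restricted to two components.

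Next I would invoke $\omega$-join-distributivity (Lemma~\ref{L:Inf-distr}) with the two-element index set $T = \{1,2\}$. Writing $a_1 = y$, $a_2 = z$, and $b = x$, the identity
\[
b \wedge \bigvee_{t \in T} a_t = \bigvee_{t \in T} (b \wedge a_t)
\]
specializes to the finite distributive law
\[
x \wedge (y \vee z) = (x \wedge y) \vee (x \wedge z)
\]
for all $x, y, z \in \tilde{\mathcal{P}}_{k\omega}$. Since in any lattice this identity is equivalent to its order dual, $(\tilde{\mathcal{P}}_{k\omega}, {\leq})$ is distributive. (Alternatively, one could specialize Lemma~\ref{L:join-inf-distr} to $T = \{1,2\}$ to obtain the dual distributive law directly.)

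I do not expect a genuine obstacle here: the two distributivity lemmas hold for arbitrary countable index sets and hence \emph{a fortiori} for the two-element index set needed for finite distributivity. The only point worth checking---and it is immediate from the uniqueness of suprema and infima in a poset---is that the binary operations obtained by specializing the $\omega$-complete structure coincide with the lattice operations $\vee$ and $\wedge$ induced by the partial order $\leq$, so that the $\omega$-join-distributive identity really does yield the classical distributive law.
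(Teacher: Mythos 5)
Your proof is correct and follows exactly the route the paper intends: the corollary is stated without proof precisely because it is the immediate specialization of the $\omega$-completeness lemma and Lemmas~\ref{L:Inf-distr} and~\ref{L:join-inf-distr} to the two-element index set, just as you argue. Your added check that the binary operations induced by $\omega$-completeness agree with the order-theoretic $\vee$ and $\wedge$ is a sensible (if routine) point of care.
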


\begin{proposition}
The cores with at most one connected component are compact and prime elements of $(\tilde{\mathcal{P}}_{k\omega}, {\leq})$.
\end{proposition}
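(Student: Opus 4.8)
The plan is to reduce both properties to a single structural fact—that a homomorphism from a connected $k$-poset into a disjoint union factors through one summand—and then to observe that the prime property yields compactness automatically. Throughout I would use that suprema in $(\tilde{\mathcal{P}}_{k\omega}, {\leq})$ are disjoint unions (as constructed in the $\omega$-completeness lemma) and that a core with at most one connected component is either the empty $k$-poset or a connected core.

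First I would record the key observation about disjoint unions: in $\bigdotcup_{t \in T}(P_t,c_t)$ two elements $(t,x)$ and $(s,y)$ can be comparable only when $t = s$, so every comparability path stays inside a single summand. Hence the connected components of the disjoint union are precisely the connected components of the individual $(P_t,c_t)$. Since, as noted in Section~\ref{sec:kposets}, homomorphic images of connected posets are connected, any homomorphism $h \colon (P,c) \to \bigdotcup_{t\in T}(P_t,c_t)$ from a connected $(P,c)$ has its image contained in one summand $\{t_0\}\times P_{t_0}$; composing $h$ with the projection onto the second coordinate then gives a homomorphism $(P,c) \to (P_{t_0},c_{t_0})$.

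With this in hand, primeness is immediate. Let $a = (P,c)$ be a core with at most one connected component and suppose $a \leq \bigvee_{t\in T} a_t$, where $a_t = (P_t,c_t)$; this means there is a homomorphism of $(P,c)$ into $\bigdotcup_{t\in T}(P_t,c_t)$. If $(P,c)$ is connected, the observation above yields $a \leq a_{t_0}$ for a single index $t_0$; if $(P,c)$ is empty it is the least element of $\tilde{\mathcal{P}}_{k\omega}$ and lies below every $a_t$ (for $T \neq \emptyset$). Either way $a \leq a_{t_0}$ for some $t_0 \in T$, which is exactly the statement that $a$ is prime. Compactness then follows at once: if $a \leq \bigvee X$, primeness gives $a \leq x$ for some $x \in X$, so $a \leq \bigvee X_1$ with the finite set $X_1 = \{x\}$.

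The only step demanding care is the claim that a connected poset maps into a single summand, and I expect this to be the crux of the argument. It rests entirely on the two facts already established in the excerpt—that homomorphisms preserve connectedness and that disjoint unions introduce no comparabilities across summands—so once these are invoked, the remainder is purely formal.
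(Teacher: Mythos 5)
Your proof is correct, and it takes a genuinely more direct route than the paper's. The paper does not apply the connectedness argument to the disjoint union $\bigdotcup_{t\in T}(P_t,c_t)$ itself; instead it first invokes $\omega$-join-distributivity (Lemma~\ref{L:Inf-distr}) to rewrite $a = a \wedge \bigvee X = \bigvee\{a \wedge x \mid x \in X\}$, obtains from this a homomorphism $\varphi \colon a \to \bigdotcup\{a \otimes x \mid x \in X\}$, and only then uses connectedness of $a$ to get $\varphi(a) \subseteq a \otimes x_0$ for a single $x_0$, whence $a \leq a \wedge x_0 \leq x_0$. You bypass the distributivity lemma entirely: since the supremum of a countable family is (represented by) the disjoint union of representatives, connectedness immediately confines the image to one summand, which is precisely the infinitary join-primeness; compactness then falls out as a one-line corollary. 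Both arguments hinge on the identical structural fact---homomorphic images of connected posets are connected, and disjoint unions have no cross-summand comparabilities---so the crux is shared. What your version buys is self-containedness (no appeal to Lemma~\ref{L:Inf-distr}) and an explicit logical ordering (primeness first, compactness as a consequence), whereas the paper extracts both conclusions simultaneously from the distributivity computation and leaves primeness implicit in the inequality $a \leq a \wedge x_0$. You also treat the degenerate case of the empty core, which the paper's proof silently skips by assuming $a$ connected.

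One caveat, which affects the paper at least as much as your proposal: the definition of compactness quantifies over arbitrary $X \subseteq \tilde{\mathcal{P}}_{k\omega}$ whose supremum exists, and $\tilde{\mathcal{P}}_{k\omega}$ is uncountable, while the identification of suprema with disjoint unions is guaranteed only for countable families (for uncountable $X$ the disjoint union need not be countable, hence need not represent an element of $\tilde{\mathcal{P}}_{k\omega}$ at all). Your argument therefore covers exactly those suprema whose existence is guaranteed by $\omega$-completeness. The paper attempts to close this gap by asserting that $\tilde{\mathcal{P}}_k$ is countable and join-dense in $\tilde{\mathcal{P}}_{k\omega}$, so that $X$ may be assumed countable; that join-density claim is itself questionable (an infinite alternating $k$-chain is an upper bound, but not the supremum, of the set of finite $k$-posets below it, so it is not a join of elements of $\tilde{\mathcal{P}}_k$). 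So your omission of this reduction step leaves you no worse off than the paper's own proof.
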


\begin{proof}
Let $a\in\mathcal{J}_k$ and $X\subseteq \tilde{\mathcal{P}}_{k\omega}$ such that $a\leq{\bigvee}X$. As $\tilde{P}_k$ is countable and join-dense in $\tilde{\mathcal{P}}_{k\omega}$, we can assume that $X$ is countable. We are looking for a finite subset $X_1\subseteq X$ such that $a\leq{\bigvee}X_1$. We have $a=a\wedge{\bigvee}X=\bigvee\{a\wedge x\mid x\in X\}$, by the $\omega$-join-distributivity. Therefore there is a $k$-poset homomorphism $\varphi: a\to \bigdotcup\{a\otimes x\mid x\in X\}$. Since $a$ is connected, $\varphi(a)$ is also connected and there is an $x_0\in X$ such that $\varphi(a)\subseteq a\otimes x_0$. Thus $\varphi$ is a $k$-poset homomorphism from $a$ to $a\otimes x_0$, i.e., $a\leq a\wedge x_0$. Therefore we can let $X_1:=\{x_0\}\subseteq X$. 
\end{proof}

All elements of $\tilde{\mathcal{P}}_k$ are finite joins of elements of $\mathcal{J}_k$, and are hence compact in $\tilde{\mathcal{P}}_{k\omega}$. Are they also compact in the MacNeille completion $\hat{\mathcal{P}}_k$ of $\tilde{\mathcal{P}}_k$? This is still an open question, and seems to be intimately related with the distributivity of $\hat{\mathcal{P}}_k$. A positive answer will say that $\hat{\mathcal{P}}_k$ is an algebraic lattice.

\section{Bounded $k$-posets with fixed labels at the extreme points}

Recall that we denote by $\mathcal{L}_k$ the set of all $k$-lattices and we denote $\tilde{\mathcal{L}}_k = \mathcal{L}_k / {\equiv}$. $\tilde{\mathcal{L}}_k$ is clearly a subposet of $\tilde{\mathcal{P}}_k$, but it is not a sublattice of $\tilde{\mathcal{P}}_k$, for the simple reason that the disjoint union of two incomparable $k$-lattices is not (homomorphically equivalent to) a $k$-lattice. Even if we consider the subposet of $\tilde{\mathcal{P}}_k$ consisting of (the equivalence classes of) those $k$-posets whose connected components are lattices, we do not have a sublattice nor even a meet-subsemilattice of $\tilde{\mathcal{P}}_k$. This is due to the fact that the label-matching product of two $k$-lattices is generally not (homomorphically equivalent to) a $k$-lattice, as Figure \ref{fig:prodoflattices} illustrates. An identical argument shows that $k$-trees do not constitute a sublattice of $\tilde{\mathcal{P}}_k$, and neither do $k$-forests ($k$-posets whose connected components are $k$-trees).

\begin{figure}
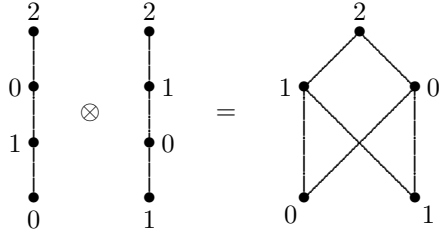

\[
\begindc{\undigraph}[3]
\obj(0,-10)[a]{0}[\south]
\obj(0,-3)[b]{1}[\west]
\obj(0,4)[c]{0}[\west]
\obj(0,11)[d]{2}[\north]
\mor{a}{b}{}
\mor{b}{c}{}
\mor{c}{d}{}
\enddc
\quad\;\otimes\quad
\begindc{\undigraph}[3]
\obj(0,-10)[a]{1}[\south]
\obj(0,-3)[b]{0}[\east]
\obj(0,4)[c]{1}[\east]
\obj(0,11)[d]{2}[\north]
\mor{a}{b}{}
\mor{b}{c}{}
\mor{c}{d}{}
\enddc
\quad\;=\quad
\begindc{\undigraph}[3]
\obj(-7,-10)[b]{0}[\southwest]
\obj(7,-10)[c]{1}[\southeast]
\obj(-7,4)[d]{1}[\west]
\obj(7,4)[e]{0}[\east]
\obj(0,11)[f]{2}[\north]
\mor{b}{d}{}
\mor{b}{e}{}
\mor{c}{d}{}
\mor{c}{e}{}
\mor{d}{f}{}
\mor{e}{f}{}
\enddc
\]
\caption{The label-matching product of $k$-lattices is not generally a $k$-lattice.}
\label{fig:prodoflattices}
\end{figure}

In this section, we will consider families of bounded $k$-posets with fixed labels on their extreme points. These families constitute meet-subsemilattices of $\tilde{\mathcal{P}}_k$. We will describe the suprema within these families, and we establish that these families constitute universal distributive lattices under the homomorphism order.

Let $k \geq 1$, and let $a, b \in \{0, 1, \dotsc, k-1\}$. Denote by $\mathcal{P}_k^{ab}$ the set of finite bounded $k$-posets $(P,c)$ with a largest element $\top$ and a smallest element $\bot$ such that $c(\top) = a$ and $c(\bot) = b$. Denote $P^\dagger := P \setminus \{\top,\bot\}$. Again, denote by $\tilde{\mathcal{P}}_k^{ab}$ the quotient $\mathcal{P}_k^{ab} / {\equiv}$.

Let $(P,c), (P',c') \in \mathcal{P}_k^{ab}$. It is easy to verify that the label-matching product $(P,c) \otimes (P',c')$ is again in $\mathcal{P}_k^{ab}$, and hence $\tilde{\mathcal{P}}_k^{ab}$ is a meet-subsemilattice of $\tilde{\mathcal{P}}_k$. However, the core of the disjoint union $(P,c) \dotcup (P',c')$ is generally not a bounded $k$-poset, and hence we need to verify if $(P,c)$ and $(P',c')$ have an infimum in $\tilde{\mathcal{P}}_k^{ab}$.

Define the binary operation $\uplus$ on $\mathcal{P}_k^{ab}$ as follows. For $i = 1, 2$, let $(P_i,c_i) \in \mathcal{P}_k^{ab}$, and let $\top_{P_i}$ and $\bot_{P_i}$ be the largest and smallest elements of $P_i$. We let $(P_1,c_1) \uplus (P_2,c_2) = (Q,d)$, where
\[
Q = (P_1^\dagger \dotcup P_2^\dagger) \cup \{\top_Q, \bot_Q\}
\]
where $\top_Q, \bot_Q$ are new elements not occurring in $P_1$ nor $P_2$. The ordering of $Q$ is defined as follows: $\top_Q$ and $\bot_Q$ are the largest and the smallest element of $Q$, respectively, and for $(i,x), (j,y) \in P_1^\dagger \dotcup P_2^\dagger$, we have $(i,x) \leq (j,y)$ if and only if $i = j$ and $x \leq y$ in $P_i$. The labeling $d$ of $Q$ is defined by
\[
d(x) =
\begin{cases}
a & \text{if $x = \top_Q$,} \\
b & \text{if $x = \bot_Q$,} \\
c_i(y) & \text{if $x = (i,y) \in P_1^\dagger \dotcup P_2^\dagger$.}
\end{cases}
\]
Thus, we can think of $(P_1, c_1) \uplus (P_2, c_2)$ being obtained from the disjoint union $(P_1, c_1) \dotcup (P_2, c_2)$ by gluing together the top and bottom elements of the connected components.

\begin{lemma}
\label{L:supPkab}
$(P_1,c_1) \uplus (P_2,c_2)$ is the supremum of $(P_1,c_1)$ and $(P_2,c_2)$ in $\tilde{\mathcal{P}}_k^{ab}$.
\end{lemma}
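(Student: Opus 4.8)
The plan is to show that $(P_1,c_1) \uplus (P_2,c_2)$ satisfies the universal property of a join in $\tilde{\mathcal{P}}_k^{ab}$: it is an upper bound of both $(P_1,c_1)$ and $(P_2,c_2)$, and it lies below every common upper bound $(R,e) \in \mathcal{P}_k^{ab}$. First I would verify that $(P_i,c_i) \leq (P_1,c_1) \uplus (P_2,c_2)$ for $i=1,2$ by exhibiting explicit homomorphisms. The natural candidate sends $\top_{P_i} \mapsto \top_Q$, $\bot_{P_i} \mapsto \bot_Q$, and each interior element $x \in P_i^\dagger$ to $(i,x)$. This is order- and label-preserving essentially by construction of $Q$: the interior of each $P_i$ is embedded as the corresponding summand, and the top/bottom match because $d(\top_Q) = a = c_i(\top_{P_i})$ and $d(\bot_Q) = b = c_i(\bot_{P_i})$.

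The substantive direction is the upper-bound-minimality. Suppose $(R,e) \in \mathcal{P}_k^{ab}$ with homomorphisms $f_i \colon (P_i,c_i) \to (R,e)$ for $i=1,2$. I want to build a single homomorphism $g \colon (P_1,c_1)\uplus(P_2,c_2) \to (R,e)$. The idea is to glue the two maps: send $\top_Q \mapsto \top_R$, $\bot_Q \mapsto \bot_R$, and each $(i,x) \in P_1^\dagger \dotcup P_2^\dagger \mapsto f_i(x)$. The key points to check are that $g$ is well defined, label-preserving, and order-preserving. Labels are immediate from the labelings of $f_i$ and the fixed values $a,b$ at the extreme points. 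For order-preservation, comparabilities entirely inside one summand $P_i^\dagger$ are handled by $f_i$; comparabilities with $\top_Q$ or $\bot_Q$ are handled because $\top_R$ and $\bot_R$ are the global extreme points of $R$, so $f_i(x) \leq \top_R$ and $\bot_R \leq f_i(x)$ hold automatically.

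The one place requiring a little care — and the step I expect to be the main (though mild) obstacle — is confirming that $f_i$ must map the top and bottom of $P_i$ to the top and bottom of $R$, so that $g$ is consistent at the glued elements. Since $(P_i,c_i)$ is bounded with greatest element $\top_{P_i}$, and $f_i$ preserves order, $f_i(\top_{P_i})$ is an upper bound for the image $f_i(P_i)$; however this need not force $f_i(\top_{P_i}) = \top_R$ in general. This is precisely why the definition of $g$ sends $\top_Q$ directly to $\top_R$ rather than relying on the $f_i$, and the only thing that must be verified is order-compatibility, which follows because every element of $R$ lies below $\top_R$ and above $\bot_R$. Once this is in place, $g$ is a well-defined homomorphism witnessing $(P_1,c_1)\uplus(P_2,c_2) \leq (R,e)$, and combined with the first paragraph this establishes that $(P_1,c_1)\uplus(P_2,c_2)$ is the supremum in $\tilde{\mathcal{P}}_k^{ab}$.
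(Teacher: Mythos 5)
Your proof is correct and takes essentially the same approach as the paper: the same embeddings $\top_{P_i} \mapsto \top_Q$, $\bot_{P_i} \mapsto \bot_Q$, $x \mapsto (i,x)$ for the upper-bound direction, and the same glued map $\top_Q \mapsto \top_R$, $\bot_Q \mapsto \bot_R$, $(i,x) \mapsto f_i(x)$ for minimality. Your side observation that $f_i(\top_{P_i})$ need not equal $\top_R$ is correctly resolved exactly as the paper implicitly does --- $\top_Q$ and $\bot_Q$ are new elements of $Q$, so the glued map is defined on them directly and no consistency condition on the $f_i$ is required.
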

\begin{proof}
Denote $(Q,d) = (P_1,c_1) \uplus (P_2,c_2)$ For $i = 1, 2$, the mapping $h_i \colon (P_i,c_i) \to (Q,d)$ given by
\[
h_i(x) =
\begin{cases}
\top_Q & \text{if $x = \top_{P_i}$,} \\
\bot_Q & \text{if $x = \bot_{P_i}$,} \\
(i,x) & \text{if $x \in P_i^\dagger$}
\end{cases}
\]
is easily seen to be a homomorphism.

Now, assume that $(P',c') \in \mathcal{P}_k^{ab}$ and there exist homomorphisms $h_i \colon (P_i, c_i) \to (P',c')$ for $i = 1, 2$. Define a map $h \colon (Q,d) \to (P',c')$ by
\[
h(x) =
\begin{cases}
\top_{P'} & \text{if $x = \top_Q$,} \\
\bot_{P'} & \text{if $x = \bot_Q$,} \\
h_i(y) & \text{if $x = (i,y) \in Q^\dagger$.}
\end{cases}
\]
It is straightforward to verify that $h$ is a homomorphism. We conclude that $(P_1,c_1) \uplus (P_2,c_2)$ is the supremum of $(P_1,c_1)$ and $(P_2,c_2)$ in $\tilde{\mathcal{P}}_k^{ab}$.
\end{proof}

\begin{proposition}
$(\tilde{\mathcal{P}}_k^{ab}; \otimes, \uplus)$ is a distributive lattice.
\end{proposition}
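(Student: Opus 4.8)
The plan is to exploit the fact, already recorded just before the statement, that $(\tilde{\mathcal{P}}_k^{ab}; \otimes, \uplus)$ is a lattice: its meet is the label-matching product $\otimes$ (which keeps us inside $\mathcal{P}_k^{ab}$), and its join is $\uplus$ by Lemma~\ref{L:supPkab}. Since a lattice is distributive as soon as the single identity $x \wedge (y \vee z) = (x \wedge y) \vee (x \wedge z)$ holds for all elements, I would only prove the join-distributive law, namely that
\[
(Q,d) \otimes \bigl( (P_1,c_1) \uplus (P_2,c_2) \bigr)
\quad\text{and}\quad
\bigl( (Q,d) \otimes (P_1,c_1) \bigr) \uplus \bigl( (Q,d) \otimes (P_2,c_2) \bigr)
\]
are homomorphically equivalent for arbitrary $(Q,d),(P_1,c_1),(P_2,c_2) \in \mathcal{P}_k^{ab}$. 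One of the two required inequalities, $(x \wedge y) \vee (x \wedge z) \leq x \wedge (y \vee z)$, holds in every lattice, so the whole task reduces to producing a single homomorphism from the left-hand $k$-poset to the right-hand one.

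To build that homomorphism I would first unwind both sides explicitly. Writing $(R,e) = (P_1,c_1) \uplus (P_2,c_2)$, a typical element of the left-hand side is a pair $(x,r)$ with $x \in Q$, $r \in R$, and $d(x) = e(r)$. I would then send $(x,r)$ to the glued top of the right-hand side when $r = \top_R$, to its glued bottom when $r = \bot_R$, and to the element $(i,(x,y))$ of the $i$-th component when $r = (i,y)$ with $y \in P_i^\dagger$. The key well-definedness observation is that, because $y$ is an interior point of $P_i$, the pair $(x,y)$ is neither the top $(\top_Q,\top_{P_i})$ nor the bottom $(\bot_Q,\bot_{P_i})$ of $(Q,d) \otimes (P_i,c_i)$, so it genuinely lies in the deleted-extremes part $\bigl( (Q,d) \otimes (P_i,c_i) \bigr)^\dagger$ that $\uplus$ keeps separate. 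Label preservation is immediate, since every matched pair carries the common label $d(x) = e(r)$.

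The remaining and main technical step is to check that this map preserves order, and this is where I expect the only genuine case analysis. Given $(x,r) \leq (x',r')$ on the left, meaning $x \leq x'$ in $Q$ and $r \leq r'$ in $R$, I would distinguish according to where $r$ and $r'$ sit: if either $r = \bot_R$ or $r' = \top_R$ the image inequality is automatic, because the glued extremes really are the extremes of the target; if $r$ and $r'$ lie in the same component $i$ then monotonicity follows from the componentwise comparison $(x,y) \leq (x',y')$ inside $(Q,d) \otimes (P_i,c_i)$; and the case of $r,r'$ in distinct interior components cannot occur, since in $\uplus$ interior elements of different components are incomparable except through the shared top and bottom. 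The main obstacle is thus purely bookkeeping---ensuring that the interplay between the glued extremes of $\uplus$ and the extremes introduced by $\otimes$ is handled in every case---rather than any deep structural difficulty. Having produced the homomorphism, I would conclude that the displayed identity holds in $\tilde{\mathcal{P}}_k^{ab}$, and hence that $(\tilde{\mathcal{P}}_k^{ab}; \otimes, \uplus)$ is distributive.
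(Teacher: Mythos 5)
Your proposal is correct and is essentially the paper's argument: the lattice structure is quoted from Lemma~\ref{L:supPkab}, and the distributive law is established by the same gluing homomorphism $(x,\top_R)\mapsto\top$, $(x,\bot_R)\mapsto\bot$, $(x,(i,y))\mapsto(i,(x,y))$, with the same well-definedness observation (interior $y$ forces $(x,y)$ to be interior in $(Q,d)\otimes(P_i,c_i)$) and the same case analysis for order preservation. The only structural difference is that the paper proves the homomorphic equivalence by also constructing an explicit reverse homomorphism $g\colon \bigl((Q,d)\otimes(P_1,c_1)\bigr)\uplus\bigl((Q,d)\otimes(P_2,c_2)\bigr)\to (Q,d)\otimes\bigl((P_1,c_1)\uplus(P_2,c_2)\bigr)$, whereas you rightly note that this direction is the inequality $(x\wedge y)\vee(x\wedge z)\leq x\wedge(y\vee z)$, valid in every lattice, so a single homomorphism suffices --- a legitimate and slightly more economical route to the same conclusion.
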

\begin{proof}
The claim that $(\tilde{\mathcal{P}}_k^{ab}; \otimes, \uplus)$ is a lattice follows from Lemma \ref{L:supPkab} and the discussion preceding it.

Let $(P_i,c_i) \in \mathcal{P}_k^{ab}$ for $i = 1, 2, 3$. We will verify that the distributive law
\[
P_1 \otimes (P_2 \uplus P_3) \equiv (P_1 \otimes P_2) \uplus (P_1 \otimes P_3)
\]
holds by showing that the $k$-posets on each side of the above equation are homomorphically equivalent.

First, define the map $h \colon P_1 \otimes (P_2 \uplus P_3) \to (P_1 \otimes P_2) \uplus (P_1 \otimes P_3)$ by
\[
h(X,Y) =
\begin{cases}
\top & \text{if $X = \top_{P_1}$ or $Y = \top_{P_2 \uplus P_3}$,} \\
\bot & \text{if $X = \bot_{P_1}$ or $Y = \bot_{P_2 \uplus P_3}$,} \\
(i,(X,y)) & \text{if $X \in P^\dagger_1$, $Y = (i,y)$, $y \in P_{i+1}^\dagger$ ($i = 1, 2$).}
\end{cases}
\]
It is clear that $h$ is label-preserving. We need to verify that $h$ is also order-preserving. Thus, let $(X,Y) < (X',Y')$ in $P_1 \otimes (P_2 \uplus P_3)$. If $X = \bot_{P_1}$ or $Y = \bot_{P_2 \uplus P_3}$ or $X' = \top_{P_1}$ or $Y' = \top_{P_2 \uplus P_3}$, then it is clear that $h(X,Y) \leq h(X',Y')$. Otherwise $X, X' \in P_1^\dagger$, $Y, Y' \in (P_2 \uplus P_3)^\dagger$ and so $X \leq X'$ in $P_1$ and $Y \leq Y'$ in $P_2 \uplus P_3$. The latter condition implies that $Y = (i,y)$, $Y' = (i,y')$ for some $i \in \{1, 2\}$, $y, y' \in P_{i+1}$ and $y \leq y'$ in $P_{i+1}$. Thus,
\[
h(X,Y) = (i,(X,y)) \leq (i,(X',y')) = h(X',Y')
\quad\text{in $(P_1 \otimes P_2) \uplus (P_1 \otimes P_3)$.}
\]

Next, we define the map $g \colon (P_1 \otimes P_2) \uplus (P_1 \otimes P_3) \to P_1 \otimes (P_2 \uplus P_3)$ by
\[
g(X) =
\begin{cases}
(\top_{P_1}, \top_{P_2 \uplus P_3}) & \text{if $X = \top$,} \\
(\bot_{P_1}, \bot_{P_2 \uplus P_3}) & \text{if $X = \bot$,} \\
(x,(i,y)) & \text{if $X = (i, (x,y)) \in \bigl((P_1 \otimes P_2) \uplus (P_1 \otimes P_3)\bigr)^\dagger$.}
\end{cases}
\]
It is clear that $g$ is label-preserving. We need to verify that $g$ is also order-preserving. Thus, let $X < X'$ in $(P_1 \otimes P_2) \uplus (P_1 \otimes P_3)$. If $X = \bot$ or $Y = \top$, then it is clear that $g(X) \leq g(X')$. Otherwise $X, X' \in \bigl( (P_1 \otimes P_2) \uplus (P_1 \otimes P_3) \bigr)^\dagger$ and so $X = (i, (x,y))$, $X' = (i, (x', y'))$ for some $i \in \{1, 2\}$ and $x, x' \in P_1$, $y, y' \in P_{i+1}$ and $x \leq x'$ in $P_1$ and $y \leq y'$ in $P_{i+1}$. Thus
\[
h(X) = (x, (i,y)) \leq (x', (i, y')) = h(X')
\quad\text{in $P_1 \otimes (P_2 \uplus P_3)$.}
\]

Since both $h$ and $g$ are homomorphisms, we conclude that the claimed homomorphical equivalence holds.
\end{proof}

\begin{theorem}
The posets $\tilde{\mathcal{P}}_k^{ab}$ and $\tilde{\mathcal{L}}_k^{ab}$ are universal for every $k \geq 3$, $a, b \in \{0, \dotsc, k-1\}$.
\end{theorem}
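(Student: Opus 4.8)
The plan is to transfer universality from the homomorphism order of (loopless) directed graphs—already known to be universal and used in the universality theorem above—to $\tilde{\mathcal{P}}_k^{ab}$ and $\tilde{\mathcal{L}}_k^{ab}$ by exhibiting an order-embedding of the digraph order. Concretely, I would design, for each loopless digraph $G$, a bounded $k$-poset $N_G \in \mathcal{P}_k^{ab}$ (a $k$-lattice for the $\tilde{\mathcal{L}}_k^{ab}$ claim) that is a variant of the $L_G$ construction of Section~\ref{sec:representation}, and then prove the faithfulness statement $G \to H \iff N_G \to N_H$. Granting faithfulness, the map $G \mapsto [N_G]$ is an order-embedding (by the same reasoning as in Propositions~\ref{GHhomom} and~\ref{GHhomomLattice}), so universality of the digraph order forces universality of $\tilde{\mathcal{P}}_k^{ab}$ and $\tilde{\mathcal{L}}_k^{ab}$.

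For the construction, $N_G$ is obtained from $P_G$ by relabeling its interior with two distinct labels and adjoining a greatest element $\top$ with $c(\top)=a$ and a least element $\bot$ with $c(\bot)=b$; this manifestly lies in $\mathcal{P}_k^{ab}$. The easy situation is when two labels outside $\{a,b\}$ are available for the interior, i.e.\ when $a=b$, or when $k\geq 4$: then $\top$ and $\bot$ carry labels that never occur in the interior, so exactly as in the proof of Proposition~\ref{GHhomomLattice} every homomorphism must send $\top \mapsto \top$ and $\bot \mapsto \bot$ (their labels are unique among the extremes, and the strict comparability $\bot < (\text{interior}) < \top$ separates them), after which the alternating-chain argument of Proposition~\ref{GHhomom} recovers a homomorphism $G \to H$. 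The forward direction is routine: a graph homomorphism extends to $N_G \to N_H$ by acting as in Proposition~\ref{GHhomom} on the interior and fixing $\top,\bot$.

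The crux is the tight case $k=3$ with $a\neq b$, where only a single spare label $c\notin\{a,b\}$ is available; since a monochromatic interior collapses (all bounded monochromatic posets are homomorphically equivalent), the interior still needs two colours, and hence one extreme's label must be reused inside. I would keep one extreme—say $\bot$, labelled by a value not reused—uniquely labelled, so that $\bot\mapsto\bot$ is forced, and then \emph{protect} both extremes by inserting buffer elements labelled $c$: a floor $f$ with $\bot<f<(\text{all of }P_G)$ and a ceiling $g$ with $(\text{all of }P_G)<g<\top$. The inequality $x<g$ forces $h(x)<h(g)\leq\top_H$ with $h(g)\neq\top_H$, so no interior element can escape upward to $\top_H$, and dually the floor blocks escape to $\bot_H$; the reused extreme $\top$ is then pinned by attaching a rigid alternating signature running $b,c,a$ up to $\top$, which by alternating-chain rigidity must map onto the corresponding signature of $N_H$. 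What remains is to verify that the interior $P_G$ is sent into $P_H$ (and not onto the buffers $f_H,g_H$), which I expect to settle by composing $h$ with a retraction of $N_H$ onto its core—collapsing the buffers back into $P_H$—so that Proposition~\ref{GHhomom} again yields $G\to H$; this faithful recovery in the presence of a reused extreme label is the main obstacle of the argument.

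Finally, for $\tilde{\mathcal{L}}_k^{ab}$ I would check that $N_G$ is a lattice whenever $G$ is loopless: since $L_G$ is a $3$-lattice in that case, every pair of interior elements already has a join in $L_G$ (an element of $P_G$ or $\top$), and inserting the single ceiling $g$ only replaces the joins that equalled $\top$ by $g$ as their new least upper bound, leaving all other joins and all meets intact, so $N_G\in\mathcal{L}_k^{ab}$. Using loopless digraphs throughout (these already give a universal order, as invoked in the universality theorem), the embedding $G\mapsto[N_G]$ lands in $\tilde{\mathcal{L}}_k^{ab}$, and by Lemma~\ref{L:supPkab} the operation $\uplus$ supplies the required suprema, completing the proof.
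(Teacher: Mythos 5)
Your reduction from the digraph order works in the easy cases ($a=b$, or $k\geq 4$), where the two bounds carry labels absent from the interior and the argument of Proposition~\ref{GHhomomLattice} applies verbatim; but in the crux case $k=3$, $a\neq b$ the construction you describe is genuinely not faithful, and the step you defer (``the interior $P_G$ is sent into $P_H$'') is false. Concretely, let $\{a,b,c\}$ be the three labels, relabel the interior of $P_G$ by $0\mapsto c$, $1\mapsto a$, and take $G$ to be a single directed edge $e=(u,v)$ and $H$ a single vertex $w$ with no edge, so that $G\not\to H$. Then $N_H$ is the chain $\bot_H<f_H<(w,0)<(w,1)<g_H<\top_H$ with labels $b,c,c,a,c,a$, and the map sending $\bot_G,f_G,g_G,\top_G$ to $\bot_H,f_H,g_H,\top_H$, sending $(u,0),(v,0)\mapsto(w,0)$ and $(u,1),(v,1),(e,1)\mapsto(w,1)$, and sending $(e,0)\mapsto g_H$ preserves labels and all covering relations (in particular $(e,1)<(e,0)$ goes to $(w,1)<g_H$ and $(u,0)<(e,0)$ to $(w,0)<g_H$). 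Hence $N_G\to N_H$ although $G\not\to H$: the edge gadget escapes into the ceiling buffer, precisely because $g_H$ carries the spare label $c$ and lies above all of $P_H$. The dual escape through $f_H$ kills the opposite relabelling $0\mapsto a$, $1\mapsto c$, and your $b,c,a$ signature chain changes nothing here: in this counterexample $\top_G$ already maps to $\top_H$, so the problem is not a wandering $\top$ but interior elements landing on buffers (moreover, a $b$-labelled signature element would ruin the uniqueness of $b$ that you use to pin $\bot$).

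The proposed repair---composing $h$ with a retraction of $N_H$ onto its core so as to collapse the buffers back into $P_H$---cannot work, because the collapse goes the other way. Since $f_H$ lies below every element of $P_H$, an endomorphism removing $f_H$ while keeping $P_H$ pointwise fixed would need a $c$-labelled element other than $f_H$ below all of $P_H$, and the only element below all of $P_H$ is $\bot_H$, labelled $b$. In the example above the core of $N_H$ is the five-element alternating chain $\bot_H<f_H<(w,1)<g_H<\top_H$ (labels $b,c,a,c,a$), obtained by retracting the gadget element $(w,0)$ onto the buffer $f_H$; that is, passing to the core destroys exactly the gadget structure that Proposition~\ref{GHhomom} needs in order to read off a graph homomorphism. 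This is also why the paper does not argue via the digraph representation in this theorem at all: its proof takes the rigid $3$-lattices $\mathcal{E}(A)$ from the proof of Theorem 4.6 of \cite{Universal} and merely adjoins an $a$-labelled top and a $b$-labelled bottom, the rigidity of $\mathcal{E}(A)$ being what guarantees faithfulness even when the labels $a,b$ are reused inside. To rescue your route you would need gadgets that cannot be absorbed by an element comparable with the whole interior; the $P_G$ gadgets do not have this property, so the case $k=3$, $a\neq b$ remains a genuine gap in your argument.
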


\begin{proof}
The proof is a simple adaptation of the proof of the universality of $\tilde{\mathcal{L}}_k$ presented in \cite[Theorem 4.6]{Universal}. The $k$-posets $\mathcal{E}(A)$ used in the representation of an arbitrary countable poset are $3$-lattices. We just need to adjoin new top and bottom elements $\top$ and $\bot$ with labels $c(\top) = a$ and $c(\bot) = b$. The resulting $k$-posets $\mathcal{E}'(A)$ are members of $\tilde{\mathcal{L}}_k^{ab}$, and it is clear that there exists a homomorphism from $\mathcal{E}'(A)$ to $\mathcal{E}'(B)$ if and only if there exists a homomorphism from $\mathcal{E}(A)$ to $\mathcal{E}(B)$. The claim thus follows.
\end{proof}

\section*{Acknowledgements}

This work was initiated while the first author was visiting Tampere University of Technology, and some parts of it were carried out while both authors were visiting the Université du Québec en Outaouais and while the first author was visiting the University of Luxembourg. We would like to thank the above-mentioned universities for providing working facilities.

The authors would like to thank Ross Willard for helpful discussions of the topic.

\end{document}